\documentclass[11pt]{article}
\newif\ifnotlms \notlmstrue
\usepackage{amssymb,amsmath,amsthm,verbatim}
\usepackage{fullpage}
\usepackage{xcolor}
\usepackage{url}
\usepackage{mathrsfs}
\usepackage[all]{xy}
  \SelectTips{cm}{10}
  \everyxy={<2.5em,0em>:}
\usepackage{tikz}
\usepackage{picinpar} % for pictures in paragraphs

\usepackage{fancyhdr}
  \pagestyle{fancy}
  \fancyhead[R]{\thepage}
  \fancyhead[L]{}
  \fancyfoot{}
  
  \setlength{\headheight}{14.5pt}

\usepackage{ifpdf}
  \ifpdf
    \usepackage{hyperref} % messes up dvi output for some reason
  \else
    \newcommand{\texorpdfstring}[2]{#1}
    \newcommand{\href}[2]{#2}
  \fi

\theoremstyle{plain}
  \newtheorem{lemma}[equation]{Lemma}
  \newtheorem{proposition}[equation]{Proposition}
  \newtheorem{theorem}[equation]{Theorem}
  \newtheorem{corollary}[equation]{Corollary}
    \newtheorem{question}[equation]{Question}

\theoremstyle{definition}
  \newtheorem{definition}[equation]{Definition}
  \newtheorem{notation}[equation]{Notation}

\theoremstyle{remark}
  \newtheorem{remark}[equation]{Remark}

\renewcommand{\thesection}{\arabic{section}}
\renewcommand{\theequation}{\thesection.\arabic{equation}}

%% used to produce the warning symbol
 \DeclareFontFamily{U}{manual}{}
 \DeclareFontShape{U}{manual}{m}{n}{ <->  manfnt }{}
 \newcommand{\manfntsymbol}[1]{%
    {\fontencoding{U}\fontfamily{manual}\selectfont\symbol{#1}}}

\makeatletter
   \@addtoreset{section}{part}
   \@addtoreset{equation}{section}
   \@addtoreset{footnote}{section}

    {\hspace*{\fill}$\lrcorner$\endgraf\endgroup\end{trivlist}}
 \newenvironment{example}[1][]{
   \refstepcounter{equation}
   \begin{proof}[Example~\theequation%
   \@ifnotempty{#1}{ (#1)}.]
   }
  {\end{proof}}
\makeatother

%%% These three lines load and resize a caligraphic font
%%% which I use whenever I want lowercase \mathcal
  \DeclareFontFamily{OT1}{pzc}{}
  \DeclareFontShape{OT1}{pzc}{m}{it}{<-> s * [1.100] pzcmi7t}{}
  \DeclareMathAlphabet{\mathpzc}{OT1}{pzc}{m}{it}

\newif\ifhascomments \hascommentstrue
\ifhascomments
  \newcommand{\anton}[1]{{\color{red}[[\ensuremath{\bigstar\bigstar\bigstar} #1]]}}
  \newcommand{\matt}[1]{{\color{red}[[\ensuremath{\spadesuit\spadesuit\spadesuit} #1]]}}
\else
  \newcommand{\anton}[1]{}
  \newcommand{\matt}[1]{}
\fi

 % old \> is a spacing command
 % what is old \a?

\renewcommand{\AA}{\mathbb{A}}

\DeclareMathOperator{\Aut}{\ensuremath{\mathcal{A}\kern-.125em\mathpzc{ut}}}

\newcommand{\can}{\mathrm{can}}
\DeclareMathOperator{\cok}{cok}

\newcommand{\CC}{\mathbb C}
\DeclareMathOperator{\CaCl}{CaCl}
\DeclareMathOperator{\Cl}{Cl}

\newcommand{\D}{\mathcal D}

\newcommand{\E}{\mathcal E}
\DeclareMathOperator{\Endo}{\ensuremath{\mathcal{E}\kern-.125em\mathpzc{nd}}}

\newcommand{\GG}{\mathbb G}

\let\hom\relax
\DeclareMathOperator{\hom}{Hom}
\newcommand{\hhat}[1]{\widehat{#1}}
\DeclareMathOperator{\Hom}{\ensuremath{\mathcal{H}\kern-.125em\mathpzc{om}}}
\newcommand{\id}{\mathrm{id}}

\newcommand{\K}{\mathcal{K}}

\renewcommand{\L}{\mathcal L}
\newcommand{\m}{\mathfrak m}
\newcommand{\M}{\mathcal M}
\newcommand{\N}{\mathcal N}

\renewcommand{\O}{\mathcal O}
\DeclareMathOperator{\pic}{Pic}

\newcommand{\PP}{\mathbb{P}}

\newcommand{\QQ}{\mathbb Q}
\newcommand{\RR}{\mathbb R}
\DeclareMathOperator{\rk}{rk}

\renewcommand{\setminus}{\smallsetminus}

\DeclareMathOperator{\spec}{Spec}
\DeclareMathOperator{\stab}{Stab}
\DeclareMathOperator{\sym}{Sym}

\newcommand{\U}{\mathcal U}
\newcommand{\V}{\mathcal V}
\newcommand{\VV}{\mathbb V}

\newcommand{\X}{\mathcal{X}}
\newcommand{\Y}{\mathcal{Y}}
\newcommand{\Z}{\mathcal{Z}}
\newcommand{\ZZ}{\mathbb{Z}}

\newcommand{\charp}[1]{#1}

%% xymatrix specific stuff
 \def\ari[#1]{\ar@{^(->}[#1]}
 \def\are[#1]{\ar[#1]^{\txt{\'et}}}
 \def\areh[#1]{\ar[#1]|{\txt{$H$-eq}}^{\txt{\'et}}}
 \def\ars[#1]{\ar@{->>}[#1]}
 \newcommand{\dplus}{\ar@{}[d]|{\mbox{$\oplus$}}}
 \newcommand{\dtimes}{\ar@{}[d]|{\mbox{$\times$}}} 

\begin{document}
\title{Torus Quotients as Global Quotients by Finite Groups}
\author{Anton Geraschenko\thanks{\texttt{geraschenko@gmail.com}}\ \ and
Matthew Satriano\thanks{Supported by NSF grant DMS-0943832 and an NSF postdoctoral fellowship (DMS-1103788).}}
%^\thanks{\textit{2010 Mathematics Subject Classification.} 14L30, 14D23, 14M25.}
%\footnote{\textit{Keywords and phrases.} quotient, singularities, stack, toric.}
\date{}
\maketitle
\begin{abstract}
  This article is motivated by the following local-to-global question: is every variety with tame quotient singularities \emph{globally} the quotient of a smooth variety by a finite group? We show that the answer is ``yes'' for quasi-projective varieties which are expressible as a quotient of a smooth variety by a split torus (e.g.~quasi-projective simplicial toric varieties). Although simplicial toric varieties are rarely \emph{toric} quotients of smooth varieties by finite groups, we give an explicit procedure for constructing the quotient structure using toric techniques.

  The main result follows from a characterization of varieties which are expressible as the quotient of a smooth variety by a split torus. As an additional application of this characterization, we show that a variety with \emph{abelian} quotient singularities may fail to be a quotient of a smooth variety by a finite \emph{abelian} group. Concretely, we show that $\PP^2/A_5$ is not expressible as a quotient of a smooth variety by a finite abelian group.
\end{abstract}

\tableofcontents

\section{Introduction}
\label{sec:intro}
In this paper, we investigate a local-to-global question concerning quotient singularities. Recall that a variety $X$ over a field $k$ is said to have \emph{\charp{tame} (abelian) quotient singularities} if it is \'etale locally the quotient of a smooth variety by a finite (abelian) group \charp{whose order is relatively prime to the characteristic of $k$}.\footnote{Alternatively, $X$ has \charp{tame} quotient singularities if all complete local rings $\hhat\O_{X,x}$ are isomorphic to the ring of invariants in $k(x)[[t_1,\dots, t_n]]$ under the action of a finite group \charp{of order relatively prime to the characteristic of $k$}.} Every variety of the form $U/G$, where $U$ is smooth and $G$ is a finite group \charp{of order relatively prime to the characteristic of $k$}, has at worst \charp{tame} quotient singularities. The motivation for this paper is whether the converse is true, a question suggested to us by William Fulton:
\begin{question}\label{q:main}
 Let $k$ be an algebraically closed field. If $X$ is a variety over $k$ with tame quotient singularities, does there exist a smooth variety $U$ over $k$ with an action of a finite group $G$ such that $X=U/G$?
\end{question}
We show that the answer is ``yes'' when $X$ is quasi-projective and globally a quotient of a smooth variety by a torus.  In particular, every quasi-projective toric variety (with tame quotient singularities) is a global quotient by a finite abelian group (see Corollary \ref{cor:main}). These results are immediate consequences of Theorem \ref{thm:main-intro}, which characterizes quotients of smooth varieties by finite abelian groups.

\begin{theorem}
\label{thm:main-intro}
  Let $X$ be a quasi-projective variety with tame abelian quotient singularities over an algebraically closed field $k$. The following are equivalent.
  \begin{enumerate}
    \item \label{introitem:finite-quot} $X$ is a quotient of a smooth quasi-projective variety by a finite abelian group.
    \item \label{introitem:torus-quot} $X$ is the geometric quotient (in the sense of \cite{git}) of a smooth quasi-projective variety by a torus acting with finite stabilizers.
    \item \label{introitem:divisors} $X$ has Weil divisors $D_1,\dots, D_r$ whose images generate $\Cl(\hhat\O_{X,x})$ for all closed points $x$ of $X$.
    \item \label{introitem:canonical} the canonical stack over $X$ is a stack quotient of a quasi-projective variety by a torus (see Remark \ref{rmk:for-non-stacky}).
  \end{enumerate}
\end{theorem}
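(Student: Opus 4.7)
The plan is to establish the cyclic implications $(1) \Rightarrow (2) \Rightarrow (3) \Rightarrow (4) \Rightarrow (1)$; the first three are relatively direct, while $(4) \Rightarrow (1)$ is the main obstacle.

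For $(1) \Rightarrow (2)$, I would embed the finite abelian group $A$ into a split torus $T = \GG_m^r$ and form $V := (U \times T)/A$ with $A$ acting antidiagonally; the $T$-action induced by right translation on the second factor has finite stabilizers and satisfies $V/T = U/A = X$. For $(2) \Rightarrow (3)$, I would take $r$ Weil-divisor classes $D_1, \dots, D_r$ on $X$ coming from a basis of $\hom(T, \GG_m)$: each character gives a $T$-equivariant line bundle on $V$, which descends to a line bundle on the stack $[V/T]$ and hence to a Weil-divisor class on $X$. At any $x \in X$ with stabilizer $G_x \subseteq T$, the restriction map from characters of $T$ to characters of $G_x$ is surjective (closed subgroups of a split torus are themselves diagonalizable), and $\Cl(\hhat{\O}_{X,x})$ is a quotient of the character group of $G_x$; hence the $D_i$ generate each local class group.

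For $(3) \Rightarrow (4)$, let $\X := \X^{\can}$ be the canonical stack. The $D_i$ pull back to Cartier divisors on $\X$ with line bundles $\L_1, \dots, \L_r \in \pic(\X)$. I would form the $\GG_m^r$-torsor $P \to \X$ as the fiber product over $\X$ of the complements of the zero sections of the $\L_i$. Over a point with inertia $G_x$, the induced map $G_x \to \GG_m^r$ is given by the characters $\L_i|_x$; these generate $\Cl(\hhat{\O}_{X,x})$ by (3), so the map is injective, $P$ has trivial inertia, and $P$ is representable by an algebraic space. It is smooth by torsor theory and quasi-projective by Zariski-local triviality of torus torsors over the quasi-projective coarse space $X$. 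This gives $\X^{\can} = [P/\GG_m^r]$, proving (4).

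The main obstacle is $(4) \Rightarrow (1)$. The toric case is the guiding analogue: for a simplicial toric variety $X = V/T_N$, passing to a finite-index sublattice $N' \subseteq N$ on which the fan becomes smooth yields $X_{N'} = V/T_{N'}$ smooth and presents $X$ as $X_{N'}/A$ for $A = \hom(N/N', \GG_m)$; the torus isogeny $T_{N'} \twoheadrightarrow T_N$ has kernel $A$. Generalizing, given $\X^{\can} = [V/T]$, I would choose a finite subgroup $A \subseteq T$ containing every stabilizer of the $T$-action on $V$, which is possible because there are only finitely many stabilizer subgroups and each is finite. The plan is then to construct a smooth quasi-projective $U$ with $A$-action such that $U/A = X$, realized concretely as an iterated cyclic cover of $X$ along the divisors $D_i$ coming from condition (3), with the degree of the $i$-th cover chosen to trivialize $D_i$ as an element of $\Cl(\hhat{\O}_{X,x})$ at every $x$. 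The technical heart is to carry out these root extractions globally rather than only formally at singular points, producing $U$ as a genuine smooth quasi-projective scheme; the generation hypothesis in (3) ensures that all stackiness is killed (so $U$ is smooth, not merely stacky), and quasi-projectivity of $X$ ensures that the required line bundles and sections globalize so that the cyclic covers can be constructed as actual schemes.
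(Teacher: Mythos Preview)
Your implications $(1)\Rightarrow(2)$ and $(3)\Rightarrow(4)$ are essentially those of the paper (the latter is Proposition~\ref{prop:quot<->bundle} applied to $\X^\can$). Your direct route $(2)\Rightarrow(3)$ is a minor variation: the paper instead proves $(2)\Leftrightarrow(4)$ and $(3)\Leftrightarrow(4)$, but the content is the same. One small correction: in $(3)\Rightarrow(4)$, the quasi-projectivity of $P$ is not because torus torsors are Zariski-locally trivial over $X$---the torsor lives over the stack $\X$, not over $X$---but because $P\to\X$ is affine and the coarse space map $\X\to X$ is cohomologically affine, so $P\to X$ is affine.

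The genuine gap is in $(4)\Rightarrow(1)$. Your toric analogy is exactly the wrong guide: the paper proves (Proposition~\ref{prop:notquot}) that the blow-up of $\PP(1,1,2)$ at a smooth torus-fixed point is \emph{not} a toric quotient $U/G$ with $U$ smooth toric and $G$ finite, so the sublattice trick does not globalize. Likewise, choosing a finite $A\subseteq T$ containing all stabilizers buys nothing---$[V/A]$ then has the same stabilizers as $[V/T]$---and this subgroup plays no role in your subsequent construction.

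More seriously, you propose taking cyclic covers along the specific Weil divisors $D_i$ from (3). These are typically neither Cartier nor smooth, so the cyclic cover is not even defined on $X$, and pulling back to $\X^\can$ and rooting there will not produce a smooth $U$ unless the $\D_i$ are smooth with simple normal crossings. The paper's mechanism (Theorem~\ref{thm:main2}) is different: one first replaces each $D_i$ by a divisor in the same class such that $n_iD_i$ is very ample (this is where quasi-projectivity enters), then uses a Bertini argument to choose, for each $i$, \emph{several} sections $s_{i,j}$ of $\O_X(n_iD_i)$ whose vanishing loci are smooth with simple normal crossings on $\X^\can$ and whose common intersection $\bigcap_j V(s_{i,j})$ avoids the singular locus. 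One then takes the $n_i$-th root stack along all the $s_{i,j}$; smoothness of the root stack follows from SNC, and the multiple sections per class are what guarantee that the torsion line bundles on the resulting stack have jointly faithful residual representations even over the singular locus. None of these ingredients---moving within the linear equivalence class, Bertini, SNC, the need for several sections per $D_i$---appears in your plan, and without them the construction of a smooth $U$ does not go through.
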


\begin{remark}
  Theorem \ref{thm:main-intro} is a consequence of the more technical Theorem \ref{thm:criterion}, which applies to algebraic spaces over infinite fields (in contrast to quasi-projective varieties over algebraically closed fields).
\end{remark}
\begin{remark}[For non-stack-theorists]\label{rmk:for-non-stacky}
  For those not familiar with canonical stacks, (\ref{introitem:canonical}) says that $X$ can be expressed as a quotient of a smooth variety $U$ by a torus $T$ which acts freely on the preimage of the smooth locus of $X$. In fact, this implies that the stabilizers of the $T$-action are as small as one could hope for over the singular locus of $X$. See \S \ref{subsec:lb}.
\end{remark}
\begin{remark}[For stack-theorists]\label{rmk:for-stacky}
  Those familiar with stacks should not confuse Question \ref{q:main} with the question ``is every smooth tame Deligne-Mumford stack the \emph{stack quotient} of a smooth scheme by a finite group?'' The answer to this question is ``no'', e.g.~the weighted projective stack $\mathbf P(1,2)$ is not such a quotient. The appropriate stacky generalization of Question \ref{q:main} is, ``is every smooth tame Deligne-Mumford stack $\X$ a \emph{relative coarse space} of a stack quotient of a smooth scheme by a finite group?'' Our proof of $(\ref{introitem:torus-quot})\Rightarrow (\ref{introitem:finite-quot})$ shows that the answer to this question is ``yes'' when $\X$ is a stack quotient by a split torus and has quasi-projective coarse space.
\end{remark}

\medskip
We emphasize that even when $X$ is a toric variety, the answer to Question \ref{q:main} is not obvious. We show in Proposition \ref{prop:notquot} that if $X$ is the blow-up of weighted projective space $\PP(1,1,2)$ at a smooth torus-fixed point, it is not possible to present $X$ as a \emph{toric} quotient by a finite group. Nevertheless, we show in \S\ref{sec:explicit} that when $X$ is a toric variety, the proof of our main theorem gives a procedure for constructing $U$ as a (non-toric) slice of a higher-dimensional toric variety (see Theorem \ref{thm:tv}). We demonstrate this procedure for the example of $\PP(1,1,2)$ blown-up at a smooth torus-fixed point in \S\ref{sec:ex}, obtaining it as an explicit quotient of a smooth variety by $\ZZ/2$. \S\S\ref{sec:explicit}--\ref{sec:ex} are not needed for the proof of Theorem \ref{thm:main-intro}, but they show that the proof is constructive and that the construction can be described completely without stacks, even though the proof itself relies on stack-theoretic techniques.

\bigskip
There are several variants of Question \ref{q:main} that one can pose.  For example,
\begin{question} \label{q:abelian}
  If $X$ is a variety over an algebraically closed field with tame \emph{abelian} quotient singularities, then is it of the form $U/G$ with $U$ a smooth variety and $G$ a finite \emph{abelian} group?
\end{question}
We show that the answer is ``no'' with Theorem \ref{thm:P2A5}. The key input is the equivalence of (\ref{introitem:finite-quot}) and (\ref{introitem:canonical}) in Theorem \ref{thm:main-intro}, which shows that if the canonical stack over $X$ is not a \emph{stack quotient} by a torus, then $X$ cannot be expressed as a \emph{scheme quotient} of a smooth scheme by a finite abelian group.
\begin{theorem}\label{thm:P2A5}
  Let $k$ be an algebraically closed field with $\mathrm{char}(k)\nmid 60$, and let $V$ be an irreducible 3-dimensional representation of the alternating group $A_5$.
  \begin{enumerate}
  \item $X=\PP(V)/A_5$ has abelian quotient singularities, but is not a quotient of a smooth variety by a finite abelian group.
  \item\label{introitem:counterexample} Moreover, no open neighborhood of a singular point of $X$ is a quotient of a smooth variety by a finite abelian group.
  \end{enumerate}
\end{theorem}

\begin{remark}
  The property of being a quotient of a smooth variety by a finite abelian group is prima facie a global property. Question \ref{q:abelian} asks if this property is in fact \'etale local, and Theorem \ref{thm:P2A5} shows that it is not. However, the equivalence of conditions (\ref{introitem:finite-quot}) and (\ref{introitem:divisors}) of Theorem \ref{thm:main-intro} shows that it is Zariski local.
\end{remark}

There are other variants of Question \ref{q:main} in the literature whose answers are known to be positive. If one modifies Question \ref{q:main} by dropping the assumption that $G$ be finite, then the answer is ``yes'': by \cite[Corollary 2.20]{brauer}, if $X$ is a variety with quotient singularities over a field of characteristic 0, then $X=U/G$, where $U$ is a smooth scheme and $G$ is a linear algebraic group.

If one modifies Question \ref{q:main} in a different direction, requiring a finite surjection $U\to X$ with $U$ smooth, but no group action, then the answer is also ``yes'': it follows from \cite[Theorem 1]{kv} and \cite[Theorem 2.18]{brauer} that for an irreducible quasi-projective variety $X$ with quotient singularities over a field $k$, there is a finite surjection from a smooth variety to $X$.

Question \ref{q:main} therefore asks if there is a common refinement of these two variants.

\begin{remark}[What we expect a full answer to Question \ref{q:main} to look like]
Although Question \ref{q:main} is scheme-theoretic, stacks are a natural tool for answering it. The question is equivalent to asking if there is a smooth Deligne-Mumford stack $\X$ which is a quotient stack by a finite group, and which has coarse space $X$.

By \S\ref{subsec:bg-representability}, expressing a stack $\X$ as a quotient by group $G$ (i.e.~finding a representable map to $BG$) is a matter of finding a vector bundle $\V$ on $\X$ where the stabilizers of $\X$ act faithfully on the fibers of $\V$, and where $\V$ has some additional property which ensures that the induced map $\X\to BGL(\rk(\V))$ factors through $BG$ for some inclusion $G\subseteq GL(\rk(\V))$. For $G$ a torus, this condition is that $\V$ is a sum of line bundles. For $G$ finite diagonalizable, this condition is that $\V$ is a sum of torsion line bundles.

In \cite[Theorem 1]{bottomupDM}, it is shown that choosing a smooth stack with a given coarse space $X$ is a matter of choosing the ramification divisor $D$ of the coarse space map (after which $\X\cong \sqrt{D/X^\can}^\can$). With a sufficient understanding of what vector bundles are introduced by canonical stack and root stack constructions, expressing $X$ as a quotient scheme is a matter of choosing divisors on $X$ which introduce a vector bundle with the properties in the previous paragraph, and showing that $X$ cannot be expressed as a quotient is a matter of showing that there is no such choice of divisors.
\hfill$\diamond$
\end{remark}

\subsection*{Acknowledgments}
We thank Dan Abramovich, Dan Edidin, Tom Graber, and David Rydh for helpful conversations.  We especially thank Bill Fulton for suggesting Question \ref{q:main} and for the interest he took in this project.

\section{Answering Question \ref{q:main} affirmatively for torus quotients}
\label{sec:main}
This section is devoted to the proof of Theorem \ref{thm:main2}, which shows that (\ref{introitem:torus-quot}) implies (\ref{introitem:finite-quot}) in Theorem \ref{thm:main-intro} (see Remark \ref{rmk:thmsec2=>thmintro}). We begin by collecting some well-known definitions and results.

\bigskip

Given a group scheme $G$ over a field $k$, its \emph{Cartier dual} $D(G)$ is defined to be the group scheme of characters $\hom(G,\GG_m)$. A group scheme is called \emph{diagonalizable} if it is of the form $D(A)$ for a finitely generated abelian group $A$. A group scheme is called \emph{tame} if its order is prime to the characteristic of $k$. Note that diagonalizable group schemes are simply group schemes of the form $\GG_m^r\times\mu_{n_1}\times\dots\times\mu_{n_\ell}$. In particular, a tame finite diagonalizable group scheme over an algebraically closed field is simply a finite abelian constant group.

%Given a finitely generated abelian group $A$ and a field $k$, we have a group scheme $D(A):=\spec k[A]$ over $k$. Any group scheme of this form is called \emph{diagonalizable}. $D(A)$ is called \emph{tame} if the order of $A$ is prime to the characteristic of $k$. Note that diagonalizable group schemes are simply group schemes of the form $\GG_m^r\times\mu_{n_1}\times\dots\times\mu_{n_\ell}$. In particular, a tame finite diagonalizable group scheme over an algebraically closed field is simply a finite abelian constant group.\anton{We also use the notation $D(G)$ to denote the group of characters of a group. We should change this paragraph to just introduce Cartier duals in general.}

A scheme $X$ is said to have \emph{(tame) diagonalizable quotient singularities} if it is \'etale locally the quotient of a smooth scheme by a (tame) finite diagonalizable group scheme.

\subsection{Background on quotients}
\label{subsec:bg-quotients}

Throughout this paper, we will denote by $U/G$ the coarse space of the stack $[U/G]$, where $U$ is a finitely presented algebraic space and $G$ is an affine group scheme acting properly\footnote{The assumption that $G$ acts \emph{properly} is imposed to guarantee the existence of a coarse space with good properties. Properness of the action is equivalent to the diagonal of the stack $[U/G]$ being proper. Since the stabilizers of points are assumed finite, $\Delta_{[U/G]}$ is proper and quasi-finite, so it is finite. Thus, $[U/G]$ is locally finitely presented and has finite inertia, so the Keel-Mori Theorem \cite[Theorem 1.1]{conrad-KM} applies.} on $U$ with tame finite stabilizers.
When we work with quasi-projective varieties and geometric quotients (in the sense of \cite{git}), there are several technical questions one may worry about: do these two notions of quotient agree? If $U$ is a quasi-projective variety, must $U/G$ be a quasi-projective variety (and vice versa)? If the geometric quotient of $U$ by $G$ exists, is the action automatically proper? The following lemma resolves these concerns for all cases we will consider.
\begin{lemma}\label{lem:quotient=coarse-space}{\ }
 \begin{enumerate}
  \item\label{item:finite->proper} Any action of a finite group $G$ on a separated algebraic space $U$ is proper.
  \item\label{item:finite->q-proj} If a finite group $G$ acts on a quasi-projective variety $U$, then the geometric quotient of $U$ by $G$ exists, is quasi-projective, and agrees with the coarse space of the stack quotient $[U/G]$.
  \item\label{item:q-proj->quotients-agree} Suppose a torus $T$ acts on a quasi-projective variety $V$ with tame finite stabilizers. If the geometric quotient $X$ exists and is a quasi-projective variety, then the action of $T$ on $V$ is proper and $X$ is the coarse space of the stack quotient $[V/T]$.
  \item\label{item:algsp->q-proj-var} Suppose an affine group $G$ acts properly on an algebraic space $U$, with tame finite stabilizers. If the coarse space $X$ of $[U/G]$ is a quasi-projective variety, then $U$ is a quasi-projective variety.
 \end{enumerate}
\end{lemma}
\begin{proof}
 \ref{item:finite->proper}. The action is proper if and only if the action map $G\times U\to U\times U$ given by $(g,u)\mapsto (u,g\cdot u)$ is proper. Since $G$ is finite, the two maps $(g,u)\mapsto u$ and $(g,u)\mapsto g\cdot u$ are proper. Since $G$ and $U$ are separated, $G\times U$ is separated. The action map is the composition $G\times U\to (G\times U)\times (G\times U)\to U\times U$ of a closed immersion and a product of two proper maps.

 \ref{item:finite->q-proj}. The geometric quotient of $U$ by $G$ exists and is quasi-projective by \cite[Chapter IV, Proposition 1.5]{knutson}. Every point of $U$ is contained in a $G$-invariant open affine.\footnote{Say $U\subseteq \PP^n$ is locally closed. For any point $u\in U$, by graded prime avoidance, there is a positive degree homogeneous element $f$ in the ideal of $\overline U\setminus U$ which does not vanish on any point in the orbit $G\cdot u$. The non-vanishing locus of $f$, $D(f)$, is an affine neighborhood of $G\cdot u$, so $\bigcap_{g\in G}g\cdot D(f)$ is a $G$-invariant neighborhood of $G\cdot u$, and is again affine since $U$ is separated.}
If $\spec A$ is a $G$-invariant open affine neighborhood of $u\in U$, both the geometric quotient of $\spec A$ by $G$ and the coarse space of $[(\spec A)/G]$ are given by $\spec A^G$. Geometric quotients and coarse spaces can both be constructed locally and then glued, so the geometric quotient of $U$ by $G$ agrees with the coarse space of $[U/G]$.

 \ref{item:q-proj->quotients-agree}. By \cite[Converse 1.13]{git}, the action of $T$ on $V$ is proper. By \cite[Corollary 2]{sumihiro}, every point of $V$ has an invariant open affine neighborhood, so the quotient morphism $V\to X$ is affine. For an invariant open affine $\spec A \subseteq V$, both the geometric quotient of $\spec A$ by $T$ and the coarse space of $[(\spec A)/T]$ are given by $\spec A^T$. Geometric quotients and coarse spaces can both be constructed locally and then glued, so the geometric quotient of $V$ by $T$ agrees with the coarse space of $[V/T]$.

 \ref{item:algsp->q-proj-var}. Since $U\to [U/G]$ is a $G$-torsor (so affine) and $[U/G]\to X$ is a coarse space morphism (so is cohomologically affine), we have that $U\to X$ is affine. Thus, if $X$ is a quasi-projective variety, so is $U$.
\end{proof}

\subsection{Background on representability}
\label{subsec:bg-representability}

\begin{lemma}[Criteria for Representability]\label{lem:representability-criteria}
 A morphism of Artin stacks $f\colon \X\to \Y$ is representable if and only if the following equivalent conditions hold:
 \begin{enumerate}
%   \item\label{criterion:representable} $f$ is representable;
%   \item $\Delta_f\colon \X\to \X\times_\Y\X$ is a monomorphism;
%   \item $f$ is faithful;
%   \item $f(T)\colon \X(T)\to \Y(T)$ is faithful for each $T$;
  \item\label{criterion:geometric-fibers} the geometric fibers of $f$ are algebraic spaces;
  \item\label{criterion:injective-stabilizers} for every geometric point $x$ of $\X$, the induced map of stabilizer groups $\stab_\X(x)\to \stab_\Y(f(x))$ is injective.
 \end{enumerate}
\end{lemma}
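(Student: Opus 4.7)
The plan is to reduce both criteria to the standard characterization of algebraic spaces: an Artin stack $\Z$ is an algebraic space if and only if $\aut_\Z(z)$ is trivial for every geometric point $z$ of $\Z$. Once this is granted, the lemma becomes a formal 2-categorical bookkeeping argument hinging on a single computation of automorphism groups in a fiber product.

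That computation is as follows. Let $x\colon \spec\Omega\to \X$ be a geometric point with image $y = f\circ x$, and let $T\to \Y$ be a morphism from a scheme together with a factorization of $y$ through $T$. Then the automorphism group of the induced object $\ttilde x$ of $(\X\times_\Y T)(\Omega)$ is naturally isomorphic to $\ker\!\bigl(\stab_\X(x)\to \stab_\Y(y)\bigr)$. Indeed, a 2-automorphism of $\ttilde x$ consists of a 2-automorphism $\alpha$ of $x$ in $\X$ together with the necessarily trivial 2-automorphism of the image point in $T$, subject to the compatibility that $f(\alpha)$ be the identity 2-automorphism of $y$; this compatibility is precisely the condition that $\alpha$ lie in the displayed kernel.

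Representability trivially implies (\ref{criterion:geometric-fibers}), since geometric points are in particular schemes. Combining the computation above with the characterization of algebraic spaces yields (\ref{criterion:geometric-fibers}) $\Leftrightarrow$ (\ref{criterion:injective-stabilizers}): the geometric fiber over $y$ is an algebraic space if and only if every kernel $\ker(\stab_\X(x)\to \stab_\Y(y))$ with $f(x)=y$ is trivial. For (\ref{criterion:injective-stabilizers}) $\Rightarrow$ representability, let $T\to\Y$ be an arbitrary test scheme; the geometric stabilizers of the Artin stack $\X\times_\Y T$ are again kernels of stabilizer maps of the form above, hence trivial by hypothesis, so $\X\times_\Y T$ is an algebraic space.

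The main obstacle is really just the 2-categorical bookkeeping required to nail down the fiber-product stabilizer computation in a form one can safely quote; the broader structure of the argument is entirely formal, and once that identification is established no further work is needed.
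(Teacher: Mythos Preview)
Your argument is correct and follows essentially the same route as the paper: both reduce everything to the criterion that an Artin stack is an algebraic space precisely when its geometric stabilizers are trivial. The paper simply cites Conrad for both the equivalence of representability with (\ref{criterion:geometric-fibers}) and for the algebraic-space criterion, whereas you unpack the fiber-product stabilizer computation explicitly; the content is the same.
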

\begin{proof}
The equivalence of (\ref{criterion:geometric-fibers}) with representability of $f$ is shown in \cite[Corollary 2.2.7]{conrad:elliptic}. To show the equivalence of (\ref{criterion:geometric-fibers}) and (\ref{criterion:injective-stabilizers}), it suffices to base change by the map $\spec k\to\Y$ determined by $f(x)$. We can therefore assume $\Y$ is the spectrum of an algebraically closed field. In this case, we must show that $\X$ is an algebraic space if and only if its stabilizers at all geometric points are trivial.  This is shown in \cite[Theorem 2.2.5(1)]{conrad:elliptic}.
%\anton{To show (\ref{criterion:geometric-fibers})$\Leftrightarrow$(\ref{criterion:injective-stabilizers}), note that the automorphisms of an object in $(\X\times_\Y\spec k)(T)$ are precisely the kernel of the map of automorphisms induced by $\X(T)\to \Y(T)$}
\end{proof}

\begin{corollary}\label{cor:representability}
 Suppose $\X$, $\Y$, and $\Z$ are algebraic stacks, and $f\colon \X\to \Y$ is a morphism.
 \begin{enumerate}
  \item\label{rep:property-P} If $g\colon \Y\to \Z$ is a representable morphism, then $f$ is representable if and only if $g\circ f$ is representable.
  \item\label{rep:section} If $f$ has a section, it is representable.
  \item\label{rep:product} If $f$ is representable and $g\colon\X\to \Z$ is any morphism, $(f,g)\colon \X\to \Y\times\Z$ is representable.
  \item\label{rep:local} If $\Z\to \Y$ is a surjective locally finite type morphism, then $f$ is representable if and only if $f_\Z:\X\times_\Y \Z\to \Z$ is representable.
 \end{enumerate}
\end{corollary}
\begin{proof}
 (\ref{rep:property-P}), (\ref{rep:section}), and (\ref{rep:product}) follow immediately from Lemma \ref{lem:representability-criteria}(\ref{criterion:injective-stabilizers}). (\ref{rep:local}) follows from Lemma \ref{lem:representability-criteria}(\ref{criterion:geometric-fibers}), as the geometric fibers of $f$ and $f_\Z$ are the same.
\end{proof}

\begin{lemma}\label{lem:quot<->representable}
 Suppose $\X$ is an algebraic stack and $G$ is a locally finite type group scheme over an algebraic space $S$. If $U\to \X$ is a $G$-torsor, then the corresponding morphism $\X\to BG$ is representable if and only if $U$ is an algebraic space.
\end{lemma}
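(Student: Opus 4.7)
The plan is to identify $U$ as the fiber product of $\X$ with a representable cover of $BG$ and then invoke Corollary~\ref{cor:representability}(\ref{rep:local}). Specifically, the canonical map $S\to BG$ classifies the trivial $G$-torsor $G\times_S S\to S$, and the morphism $\X\to BG$ corresponds by definition to the torsor $U\to \X$. Unwinding the universal property of $BG$, the fiber product $\X\times_{BG}S$ parametrizes triples consisting of an object $x$ of $\X$, an $S$-point $s$, and an isomorphism of $G$-torsors between the torsor classified by $x$ and the trivial torsor pulled back along $s$; equivalently, a section of the torsor $U\to \X$ pulled back along $x$ together with compatibility to $s$. This is precisely the data of an $S$-point of $U$, so there is a canonical isomorphism $\X\times_{BG}S\cong U$ compatible with the projections to $\X$ and $S$.

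Next I would note that $S\to BG$ is surjective and locally of finite type: it is itself a $G$-torsor (the universal one), and $G$ is assumed locally of finite type over $S$, so $S\to BG$ is locally of finite type and smooth, in particular surjective. Therefore Corollary~\ref{cor:representability}(\ref{rep:local}) applies: $\X\to BG$ is representable if and only if its base change $U\to S$ is representable.

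Finally, since $S$ is an algebraic space, representability of $U\to S$ is equivalent to $U$ being an algebraic space (an algebraic space is representable over an algebraic space base if and only if it is itself an algebraic space, using that algebraic spaces are stable under composition with representable morphisms and under base change). Combining the two equivalences yields the lemma.

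The only subtle step is the identification $\X\times_{BG}S\cong U$; the rest is a formal application of the machinery already assembled. This identification is essentially the definition of the correspondence between $G$-torsors over $\X$ and morphisms $\X\to BG$, so I would either cite this as standard or spell it out briefly by checking that both sides represent the same functor on $\X$-schemes.
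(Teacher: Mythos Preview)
Your proposal is correct and follows essentially the same approach as the paper: both identify $U$ with the fiber product $\X\times_{BG}S$ via the cartesian square and then apply Corollary~\ref{cor:representability}(\ref{rep:local}) to conclude that $\X\to BG$ is representable if and only if $U\to S$ is, i.e.\ if and only if $U$ is an algebraic space. The paper's version is more terse, simply displaying the cartesian diagram without spelling out the moduli description or the surjectivity of $S\to BG$, but the argument is the same.
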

\begin{proof}
 The following diagram is cartesian.
 \[\xymatrix{
  U\ar[r]\ar[d] & S\ar[d]\\
  \X\ar[r] & BG
 }\]
 By Corollary  \ref{cor:representability}(\ref{rep:local}), $\X\to BG$ is representable if and only if $U\to S$ is representable (i.e.~if and only if $U$ is an algebraic space).
\end{proof}

% By Lemma \ref{lem:quot<->representable}, a quotient stack structure $\X\cong [U/G]$ is equivalent to the data of a representable morphism $\X\to BG$. If $G=GL_r$, $G=\GG_m^r$, or $G\subseteq \GG_m^r$ is a finite diagonalizable group, morphisms to $BG$ parameterize rank $r$ vector bundles, direct sums of $r$ line bundles, and direct sums of $r$ \emph{torsion} line bundles, respectively. The vector bundle $\E$ corresponding to a morphism to $f\colon \X\to BG$ is simply the pullback of the universal bundle on $BG$. For a geometric point $x$ of $\X$, the action of $\stab_\X(x)$ on the fiber $\E_x$ is given by the morphisms of stabilizers induced by $f$. So we see by Lemma \ref{lem:representability-criteria}(\ref{criterion:injective-stabilizers}) that a representable morphism $f\colon \X\to BG$ is equivalent to a vector bundle $\E$ such that the stabilizers at geometric points act \emph{faithfully} on the fibers of $\E$.

The following proposition is a variant of \cite[Lemma 2.12]{brauer}.
\begin{proposition}\label{prop:quot<->bundle}
 Let $\X$ be an algebraic stack.  Then $\X\cong [V/G]$ for some algebraic space $V$ and subgroup $G\subseteq GL_r$ if and only if there is a rank $r$ vector bundle $\E$ on $\X$ such that the stabilizers at geometric points of $\X$ act faithfully on the fibers of $\E$.  Moreover, $G$ can be taken to be $\GG_m^r$ exactly when $\E$ can be taken to be the direct sum of line bundles; and $G$ can be taken to be a finite diagonalizable group scheme exactly when $\E$ can be taken to be the direct sum of \emph{torsion} line bundles.
\end{proposition}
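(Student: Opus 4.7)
The proof will hinge on the translation between rank-$r$ vector bundles $\E$ on $\X$ and morphisms $\varphi_\E\colon \X \to \BGL_r$, where the associated $GL_r$-torsor is the frame bundle $P_\E := \isom(\O_\X^r, \E)$. By Lemma~\ref{lem:representability-criteria}, $\varphi_\E$ is representable if and only if $\stab_\X(x) \to GL_r$ is injective at every geometric point $x$, which is exactly the condition that stabilizers act faithfully on fibers of $\E$. By Lemma~\ref{lem:quot<->representable}, this is in turn equivalent to $P_\E$ being an algebraic space. This gives the main equivalence directly: given $\E$ with faithful stabilizer action, set $V := P_\E$ and $G := GL_r$ so that $\X \cong [P_\E/GL_r]$; conversely, given $\X \cong [V/G]$ with $V$ an algebraic space and $G \hookrightarrow GL_r$, set $\E := V \times^G k^r$, whose stabilizer action factors as $\stab_\X(x) \hookrightarrow G \hookrightarrow GL_r$ and so is faithful.

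The refinements proceed by reducing the structure group along a chain of subgroups. Suppose $\E = \bigoplus_{i=1}^r L_i$. The decomposition exhibits $\varphi_\E$ as the composite $\X \to B\GG_m^r \to \BGL_r$, where the first map classifies the tuple $(L_i)$. Since $\GG_m^r \hookrightarrow GL_r$ is a monomorphism, $B\GG_m^r \to \BGL_r$ is representable, so Corollary~\ref{cor:representability}(\ref{rep:property-P}) transfers representability from $\varphi_\E$ to $\X \to B\GG_m^r$. Lemma~\ref{lem:quot<->representable} then identifies the $\GG_m^r$-torsor $Q := \prod_i \isom(\O_\X, L_i)$ as an algebraic space with $\X \cong [Q/\GG_m^r]$. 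Conversely, if $\X = [V/\GG_m^r]$, decomposing the standard representation of $\GG_m^r$ on $k^r$ into characters $\chi_1,\dots,\chi_r$ yields $\E = V \times^{\GG_m^r} k^r = \bigoplus_i (V \times^{\GG_m^r} \chi_i)$, which is a direct sum of line bundles.

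The finite diagonalizable case continues the reduction along $H := \prod_i \mu_{n_i} \hookrightarrow \GG_m^r$. If each $L_i$ has order $n_i$, Kummer theory (after choosing trivializations of each $L_i^{n_i}$) produces $\mu_{n_i}$-torsors $R_i \to \X$ whose associated $\GG_m$-torsors recover $\isom(\O_\X, L_i)$. Applying Corollary~\ref{cor:representability}(\ref{rep:property-P}) again shows $\X \to BH$ is representable, and Lemma~\ref{lem:quot<->representable} identifies $\prod_i R_i$ as an algebraic space, giving $\X \cong [\prod_i R_i / H]$. Conversely, a faithful $r$-dimensional representation of a finite diagonalizable group $G = D_k(A)$ decomposes into characters (all representations of a diagonalizable group are diagonalizable), conjugating $G$ into $\GG_m^r \subseteq GL_r$; then each $L_i := V \times^G \chi_i$ is torsion since $\chi_i \in A$ has finite order.

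The one subtle point in the plan is ensuring each reduction of structure group stays within the category of algebraic spaces. This is handled uniformly: whenever $H \hookrightarrow GL_r$ is a subgroup scheme, representability of $BH \to \BGL_r$ combined with Corollary~\ref{cor:representability}(\ref{rep:property-P}) allows representability to descend from the $GL_r$-classifying map to the $H$-classifying map, at which point Lemma~\ref{lem:quot<->representable} extracts the associated $H$-torsor as an algebraic space.
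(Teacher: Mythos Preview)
Your proof is correct and follows essentially the same approach as the paper: both hinge on the equivalence between $\X\cong[V/G]$ and a representable morphism $\X\to BG$ (Lemma~\ref{lem:quot<->representable}), and both identify faithfulness of the stabilizer action on fibers with injectivity of stabilizer maps, hence representability (Lemma~\ref{lem:representability-criteria}). The paper's proof is more compressed---it treats all three cases uniformly by pulling back the appropriate universal bundle from $BG$ and observing that the stabilizer action on the fiber is exactly the induced map on stabilizers---whereas you spell out the refinements explicitly via reduction of structure group along $H\hookrightarrow\GG_m^r\hookrightarrow GL_r$ and invoke Corollary~\ref{cor:representability}(\ref{rep:property-P}) at each step; this is the same argument unpacked.
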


\begin{remark}
  The proof of Proposition \ref{prop:quot<->bundle} shows that the vector bundle $\E$ is the pullback of a universal bundle on $BG$ along the morphism $\X\to BG$ corresponding to the $G$-torsor $V\to \X$.
\end{remark}

% \begin{proposition}\label{prop:quot<->bundle}
%  Suppose $\X$ is an algebraic stack, and $f\colon \X\to BG$ is a morphism.
%
%  \anton{grrr ... need to say $\E$ is the pullback of the universal bundle}
%  An algebraic stack $\X$ is of the form $[V/G]$ for an algebraic space $V$ and \\
%  $\left\{\begin{array}{lll} \text{$G\subseteq GL_r$ an algebraic group}\\ \text{$G=\GG_m^r$}\\ \text{$G\subseteq \GG_m^r$ a finite diagonalizable group} \end{array}\right\}$ iff exists a $\left\{\begin{array}{lll}\text{rank $r$ vector bundle $\E$}\\ \text{direct sum of line bundles $\E$}\\ \text{direct sum of torsion line bundles $\E$}\end{array}\right\}$ such that the stabilizers act faithfully on the fibers of $\E$ at geometric points of $\X$.
%
% asdf
%  $G\subseteq GL_r$ an algebraic group (resp.~$G=\GG_m^r$, resp.~$G\subseteq \GG_m^r$ a finite diagonalizable group) if and only if there exists a rank $r$ vector bundle $\E$ (resp.~direct sum of line bundles $\E$, resp.~direct sum of torsion line bundles $\E$) such that the stabilizers act faithfully on the fibers of $\E$ at geometric points of $\X$.
% \end{proposition}
\begin{proof}
 By Lemma \ref{lem:quot<->representable}, an isomorphism $\X\cong [V/G]$ where $V$ is an algebraic space is equivalent to a representable morphism $f\colon \X\to BG$. Given a morphism $\X\to BG$, let $\E$ be the pullback of the universal rank $r$ vector bundle on $BGL_r$ for $G\subseteq GL_r$ (resp.~the universal sum of line bundles on $B\GG_m^r$ for $G=\GG_m^r$, resp.~the universal sum of torsion line bundles on $BG$ for $G\subseteq \GG_m^r$ finite diagonalizable). For a geometric point $x$ of $\X$, the action of $\stab_\X(x)$ on the fiber of $\E$ at $x$ is given by the morphism of stabilizers induced by $\X\to BG$. By Lemma \ref{lem:representability-criteria}(\ref{criterion:injective-stabilizers}), the stabilizer action on the fibers is faithful at all geometric points if and only if $\X\to BG$ is representable.
\end{proof}

% \begin{remark} Roughly, the above lemma says that the action of $\stab(x)$ on $(P\times^G V)_x\cong V$ is given by the morphism of stabilizers $\stab(x)\to G$ induced by $p\colon \X\to BG$. However, we do not actually get an action of $\stab(x)$ on $V$, only an isomorphism class of representation. The reason is that the stabilizer of the point of $BG$ is a group isomorphic to $G$, but this isomorphism is only defined up to inner automorphism! For example, the stabilizer of the point of $BS_3$ is a group isomorphic to $S_3$, but it has a distinguished order 2 subgroup: the image of the stabilizer of $[\{1,2,3\}/S_3]$.\end{remark}

\subsection{Background on residual separability}
\label{subsec:bg-residual-separatedness}

For Bertini arguments in positive characteristic, it is important to restrict to the following class of morphisms.
\begin{definition}\label{def:res-sep}
A morphism $f:X\to Y$ of schemes is \emph{residually separable} if for all $x\in X$, the induced extension of residue fields $k(x)/k(f(x))$ is separable (i.e.~$\spec k(x)\to \spec {k(f(x))}$ is geometrically reduced). A line bundle $\L$ on $X$ is called \emph{residually separable} if the map to projective space induced by some finite-dimensional base-point free linear system of $\L$ is residually separable.
A morphism of Deligne-Mumford stacks $\X\to\Y$ is \emph{residually separable} if there exist \'etale covers $U\to\X$ and $V\to\Y$ with $U$ and $V$ schemes, and a residually separable morphism $f:U\to V$ over $\X\to\Y$.
\end{definition}
The following lemma shows that the definition of residually separable morphisms of Deligne-Mumford stacks is compatible with the definition for schemes.
\begin{lemma}
\label{l:res-sep}
Let $f\colon X\to Y$, $g\colon Y\to Z$, and $h\colon Z'\to Z$ be morphisms of schemes.
\begin{enumerate}
  \item\label{res-sep:comp} If $f$ and $g$ are residually separable, then so is $g\circ f$.
  \item\label{res-sep:source} If $f$ is an \'etale cover, then $g$ is residually separable if and only if $g\circ f$ is.
  \item\label{res-sep:local} If $h$ is an \'etale cover, then $g$ is residually separable if and only if its pullback $g'\colon Y\times_Z Z'\to Z'$ is residually separable.
\end{enumerate}
\end{lemma}
\begin{proof}
  By \cite[Lemma \href{http://math.columbia.edu/algebraic_geometry/stacks-git/locate.php?tag=035Z}{035Z} (2)]{stacks-project}, a scheme $W$ over a field $k$ is geometrically reduced if and only if $W\times_k V$ is reduced for all reduced $k$-schemes $V$. From this, (\ref{res-sep:comp}) follows easily. Since reducedness may be checked \'etale locally, we have (\ref{res-sep:source}).
  
  For (\ref{res-sep:local}), let $y'$ denote a point of $Y\times_Z Z'$, and let $y$, $z'$, and $z$ denote its images in $Y$, $Z'$ and $Z$, respectively. For a geometric point $p$ of $z'$, the map $p\times_{z'}y'\to p\times_z y$ is \'etale since it is a composition of an open immersion $p\times_{z'}y'\to p\times_zy'$ (a pullback of the diagonal $z'\to z'\times_zz'$) and an \'etale map $p\times_zy'\to p\times_zy$ (a pullback of the \'etale map $y'\to y$). If $g$ is residually separable, then $p\times_zy$ is reduced, so $p\times_{z'}y'$ is reduced, showing that $g'$ is residually separable. Conversely, suppose $g'$ is residually separable and $p$ is a geometric point of $z$. Then $p$ factors through $z'$, and $p\times_{z'}y'$ is reduced. Since reducedness is \'etale local, $p\times_z y$ is reduced, showing that $g$ is residually separable.
\end{proof}
\begin{corollary}\label{cor:res-sep}
If $\X$ is a tame, separated, locally finitely presented Deligne-Mumford stack, then its coarse space map $\X\to X$ is residually separable.
\end{corollary}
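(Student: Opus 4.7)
The plan is to unwind Definition \ref{def:res-sep} and observe that, over an algebraically closed base field, residual separability of a scheme morphism is automatic. By definition, the coarse space map $\pi\colon \X\to X$ is residually separable if and only if, for every geometric point $\bar x\to X$, the fiber $\X_{\bar x}:=\X\times_X\spec k(\bar x)$ admits an étale cover $V\to \X_{\bar x}$ by a scheme such that the structure morphism $V\to \spec k(\bar x)$ is residually separable in the scheme sense.

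Producing such a cover is free. Since $\X$ is a Deligne-Mumford stack, it admits an étale cover $U\to \X$ by a scheme, and base-changing along $\spec k(\bar x)\to X$ yields an étale cover $V:=U\times_X\spec k(\bar x)\to \X_{\bar x}$ by a scheme.

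The content of the statement is then that $V\to \spec k(\bar x)$ is automatically residually separable. Indeed, for each $v\in V$ the residue field extension $k(v)/k(\bar x)$ is an extension of the algebraically closed, and hence perfect, field $k(\bar x)$; any such extension is separable (equivalently, $\spec k(v)\to \spec k(\bar x)$ is geometrically reduced), which is exactly the condition in Definition \ref{def:res-sep}. The hypotheses that $\X$ be tame, locally finitely presented, and have finite diagonal enter only to guarantee that $\X$ is a Deligne-Mumford stack with an honest coarse space $X$ to base-change over; there is no further obstacle.
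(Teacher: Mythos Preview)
Your argument is correct, and it is a bit more streamlined than the paper's. The paper first invokes stability of tame coarse space maps under arbitrary base change to reduce to $X=\spec k$ with $k$ algebraically closed, then uses quasi-finiteness of the coarse space map (Keel--Mori) to conclude that the residue fields of an \'etale cover $U\to\X$ are \emph{finite} over $k$, hence trivial. You bypass both citations by working directly with the geometric fiber and observing that an algebraically closed field is perfect, so every residue field extension over it is automatically separable. Your route has the pleasant side effect that tameness is not actually used in the argument (beyond ensuring the coarse space exists, which already follows from finite diagonal via Keel--Mori), so you have in fact proved a slightly stronger statement. The paper's approach, on the other hand, yields the marginally sharper conclusion that the residue field extensions are trivial rather than merely separable, though this extra information is not needed anywhere.
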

\begin{proof}
We can check that $\X\to X$ is residually separable by base changing to the points of $X$, so we may assume $X=\spec k$, where $k$ is a field. Let $x\in\X$ be the unique point. By \cite[Lemma 2.2.3]{Abramovich-Vistoli}, after replacing $k$ by an \'etale extension, we may assume $\X = [U/G]$, where $G$ is the stabilizer group of $x$ and $U=\spec A$ is a connected affine scheme. Then it suffices to check that $U\to \spec k$ is residually separable. This may be checked after replacing $U$ by its reduction, in which case $A$ is a reduced connected finite extension of $k$. Hence, $A$ is a field and $k$ is the subfield of invariants under the action of $G$, so the degree of the extension must divide the order of $G$. Since $\X$ is tame, this order is relatively prime to the characteristic, so the extension is separable.
\end{proof}

\subsection{Proof of (\ref{introitem:torus-quot})\texorpdfstring{$\Rightarrow$}{=>}(\ref{introitem:finite-quot}) in Theorem \ref{thm:main-intro}}
\label{subsec:proof-of-2=>1}

%\anton{what is Sam's example of a toric variety without a base-point free line bundle? \matt{a smooth proper toric 3-fold whose only nef line bundles are trivial: math.AG/0501204.}}
With the above preliminaries in place, we now turn to the main theorem of this section.
\begin{theorem}\label{thm:main2}
 Let $k$ be an infinite field and let $X=V/H$ be a separated algebraic space, where $V$ is a quasi-compact smooth algebraic space and $H$ is a diagonalizable group scheme over $k$ which acts properly on $V$ with tame finite stabilizers.  Further assume that every coset of $n\cdot\pic(X)\subseteq\pic(X)$ contains a residually separable base-point free line bundle, where $n$ is the least common multiple of the exponents of the stabilizers of the $H$-action.\footnote{To see $n$ is finite, note that $\X:=[V/H]$ is separated Deligne-Mumford. Therefore, there is an \'etale cover $\{X_i\to X\}_i$ such that $\X\times_X X_i=[U_i/G_i]$ for an algebraic space $U_i$ and finite group $G_i$ \cite[Lemma 2.2.3]{Abramovich-Vistoli}. The exponents of the stabilizers of geometric points of $\X_i$ divide the exponents of $G_i$.  Since $\X$ is quasi-compact, we can take the cover of $X$ to be finite.}
 Then $X=U/G$, where $U$ is a smooth algebraic space and $G$ is a \emph{finite} diagonalizable group scheme over $k$ acting properly on $U$.
\end{theorem}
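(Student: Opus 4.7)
The plan is to apply Proposition~\ref{prop:quot<->bundle} to $\X := [V/H]$: it suffices to produce a direct sum $\E = \bigoplus_{i=1}^r \L_i$ of torsion line bundles on $\X$ whose fibers carry a faithful action of the stabilizer at every geometric point, for then Proposition~\ref{prop:quot<->bundle} yields $\X \cong [V'/G]$ with $V'$ an algebraic space (necessarily smooth, since $V$ is) and $G$ a finite diagonalizable group scheme; taking coarse spaces gives $X = V'/G$.

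For the faithful stabilizer action, pick characters $\chi_1,\ldots,\chi_r$ generating $X^*(H)$. Since $H$ is diagonalizable, the restriction $X^*(H) \twoheadrightarrow X^*(H_v)$ is surjective for every stabilizer $H_v$, so the line bundles $\M_i := \L_{\chi_i}$ on $\X$ (the trivial line bundle on $V$ with $H$-action twisted by $\chi_i$) have faithful combined stabilizer action. The $\M_i$ are typically not torsion, however; to adjust them, note that $\M_i^{\otimes n}$ has trivial stabilizer action (as $|H_v|$ divides $n$) and thus descends along the coarse moduli map $\pi\colon\X\to X$ to some $N_i \in \pic(X)$ with $\M_i^{\otimes n}\cong\pi^*N_i$. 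By hypothesis, each class $[N_i] \in \pic(X)/n\pic(X)$ has a residually separable base-point free representative $Q_i$; writing $N_i \cong Q_i\otimes P_i^{\otimes n}$ and replacing $\M_i$ by $\M_i\otimes\pi^*P_i^{-1}$ (which preserves the stabilizer character) we may assume $\M_i^{\otimes n}\cong\pi^*Q_i$ with $Q_i$ residually separable and base-point free.

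The crux---and the main obstacle---is to convert these ``almost torsion'' line bundles into genuine $n$-torsion line bundles with faithful stabilizer action. My plan is to apply Bertini in its residually separable form (available in positive characteristic thanks to residual separability of $Q_i$ and the infinitude of $k$) to choose sections $s_i \in \Gamma(X, Q_i)$ whose zero loci are smooth and in sufficiently general position, then to construct $V'$ as the complete intersection in $\spec_\X \sym^*(\bigoplus_i \M_i^\vee)$ cut out by the equations $t_i^n = \pi^*s_i$, where $t_i$ is the tautological section of $\pi^*\M_i$. The Bertini argument ensures smoothness of $V'$, and a careful analysis of the commuting $H$- and $\mu_n^r$-actions on $V'$---in particular, identifying the right finite diagonalizable subgroup $G \subseteq H \times \mu_n^r$ acting on $V'$---should realize $X = V'/G$ and simultaneously produce the genuine torsion line bundles on $\X$ needed to invoke Proposition~\ref{prop:quot<->bundle}.
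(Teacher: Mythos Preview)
Your overall strategy---producing a cover $V'\to\X$ cut out by equations $t_i^n=\pi^*s_i$ inside the total space of $\bigoplus\M_i$, and then extracting a finite diagonalizable quotient structure on $X$---is exactly the route the paper takes (compare Lemma~\ref{lem:pedroot} and \S\ref{subsec:tv}). There is, however, a genuine gap: a \emph{single} section $s_i$ per line bundle $\M_i$ is not enough.

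The problem is that your $V'$ need not be an algebraic space. At a geometric point $u$ of $V'$ lying over $x\in\X$, Remark~\ref{rmk:slice-stabilizers} identifies $\stab_{V'}(u)$ with the common kernel of those characters $\chi_i$ for which $s_i(x)\neq 0$. If $x$ lies in the non-free locus $\Z=[Z/H]$ \emph{and} in the vanishing divisor of some $s_i$, the remaining characters may no longer be jointly faithful on $\stab_\X(x)$, so $V'$ acquires a nontrivial stabilizer there. Since each $V(s_i)$ is a divisor, one cannot in general arrange $V(s_i)\cap\Z=\emptyset$; already for $H=\GG_m$, $r=1$, and $\dim\Z\ge 1$ this is hopeless. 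The paper's remedy is to take, for each $i$, \emph{several} sections $s_{i,1},\dots,s_{i,c_i}$ of $\K_i$ so that $\bigcap_j V(s_{i,j})$ is disjoint from $\Z$; then at every point of $\Z$ and for every $i$ some $s_{i,j_i}$ is nonzero, which is precisely what drives the representability argument at the end of the proof of Theorem~\ref{thm:main2}. The resulting group is $G=\prod_i\mu_n^{c_i}$, and the torsion line bundles you seek live on the root stack $\Y=[V'/G]$, not on $\X$ itself---indeed $\pic(\X)$ can be torsion-free, as for $\X=\mathbf P(1,2)$, so your stated goal of finding torsion line bundles on $\X$ with faithful stabilizer action is impossible in general.
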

\begin{remark}\label{rmk:v.ample=>res-sep}
If $X$ is quasi-projective, then every coset of $n\cdot\pic(X)\subseteq\pic(X)$ automatically contains a residually separable base-point free line bundle.  Indeed, every line bundle $\L$ on $X$ can be made very ample, and hence base-point free, after twisting by a sufficiently high power of $\O_X(n)\in n\cdot\pic(X)$. Since the induced map from $X$ to projective space is an immersion, the residue field extensions are trivial, so very ample line bundles are residually separable.
\end{remark}
\begin{remark}\label{rmk:thmsec2=>thmintro}
  Theorem \ref{thm:main2}, Remark \ref{rmk:v.ample=>res-sep}, and Lemma \ref{lem:quotient=coarse-space}(\ref{item:q-proj->quotients-agree} and \ref{item:algsp->q-proj-var}) show that (\ref{introitem:torus-quot}) implies (\ref{introitem:finite-quot}) in Theorem \ref{thm:main-intro}.
\end{remark}
\begin{proof}[\ifnotlms Proof \fi of Theorem \ref{thm:main2}]
 By Proposition \ref{prop:quot<->bundle}, to show $X$ is a global quotient of a smooth algebraic space by a finite diagonalizable group, it suffices to find a smooth Deligne-Mumford stack $\Y$ with coarse space $X$, and torsion line bundles on $\Y$ such that the stabilizers of $\Y$ act faithfully on the fibers of the direct sum of the line bundles.

 By hypothesis, $X=V/H$ where $V$ is a smooth algebraic space and $H$ is a diagonalizable group scheme. Properness of the action of $H$ on $V$ is equivalent to the condition that the stack quotient $\X=[V/H]$ is separated. By Proposition \ref{prop:quot<->bundle}, there are line bundles $\M_1,\dots,\M_r$ on $\X$ so that the stabilizers of $\X$ at geometric points act faithfully on the fibers of $\M_1\oplus\dots\oplus\M_r$. The stabilizers act trivially on $\M_i^{\otimes n}$, so by \cite[Thm 10.3]{Alper:good} there is a line bundle $\K_i$ on $X$ such that
 \[
  \M_i^{\otimes n}=\phi^*\K_i,
 \]
 where $\phi:\X\to X$ is the coarse space map.
 \begin{remark}
  Instead of choosing a single $n$, we may choose $n_i$ such that $\M_i^{\otimes n_i}$ has trivial residual representations. This is desirable if one is explicitly constructing $U$ and $G$ for a specific $X$, but to make the proof more readable, we use a single $n$.
 \hfill$\diamond$
 \end{remark}
 By hypothesis, for each $i$ there exists $\N_i\in\pic(X)$ such that $\K_i\otimes\N_i^{\otimes n}$ is base-point free and residually separable.  Since the stabilizers of $\X$ act trivially on the fibers of $\phi^*\N_i$, the residual representations of $\M_i\otimes \phi^*\N_i$ are isomorphic to the residual representations of $\M_i$. Replacing $\M_i$ by $\M_i\otimes \phi^*\N_i$, we may therefore assume that each $\K_i$ is base-point free and residually separable.

 Let $\Phi:W\to\X$ be an \'etale cover by a scheme and let $\psi_i:W\to\PP^{d_i}$ denote the composite of $\Phi$, the coarse space map $\phi:\X\to X$, and the map defined by a base-point free, residually separable linear system of $\K_i$.  Since $\Phi$ is \'etale and representable, it is residually separable.  Since $\X$ is a tame Deligne-Mumford stack, Corollary \ref{cor:res-sep} shows that $\phi$ is residually separable. It follows from Lemma \ref{l:res-sep}(\ref{res-sep:comp}) that $\psi_i$ is residually separable as well.

 Applying \cite[Corollary 4.3]{bertini2} to $\psi_i$, a generic section of $\psi_i^*\O(1)=\Phi^*\phi^*\K_i=\Phi^*(\M_i)^{\otimes n}$ has smooth vanishing locus. Let $Z\subseteq V$ be the closed locus where $H$ does not act freely, let $\Z=[Z/H]\subseteq \X$, and let $Z'=\Z\times_{\X}W\subseteq W$.  We may then choose sections $s_{i,1},\dots,s_{i,c_i}$ of $\psi_i^*\O(1)$ for each $i$ satisfying the following properties: the Cartier divisor $D_{i,j}$ defined by the vanishing of $s_{i,j}$ is smooth for each $i$ and $j$, the set of divisors $\{D_{i,j}\}_{i,j}$ has simple normal crossings, and $Z'\cap\bigcap_j D_{i,j}=\varnothing$ for each $i$. Since the $D_{i,j}$ are obtained as pullbacks of hyperplane sections of $X$, they descend to smooth Cartier divisors $\D_{i,j}$ on $\X$ with simple normal crossings such that for each $i$, $\Z\cap \bigcap_j\D_{i,j}=\varnothing$. Let $\pi:\Y\to\X$ denote the $n$-th root construction along each of the $\D_{i,j}$ (see for example \cite[1.3.b]{fmn}).
 Since the $\D_{i,j}$ have simple normal crossings, $\Y$ is smooth by \cite[1.3.b.(3)]{fmn}.\footnote{We choose the $\D_{i,j}$ to have simple normal crossings so that $\Y$ is smooth, but it is possible to build $\Y$ by rooting one divisor at a time, in which case smoothness of that divisor is sufficient to ensure every step is smooth. Here is a sketch of the argument. Applying \cite[Corollary 4.3]{bertini2}, we may choose a section $s_{1,1}$ of $\Phi^*\phi^*\K_1$ so that its vanishing divisor $D_{1,1}$ is smooth and intersects $Z$ properly (for a generic choice of $s_{1,1}$, $D_{1,1}$ will be smooth, and avoiding a given point on each connected component of $Z$ is an open condition). This $D_{1,1}$ descends to a smooth Cartier divisor $\D_{1,1}$ on $\X$. Let $\pi\colon \X'\to\X$ be the $n$-th root stack of $\X$ along $\D_{1,1}$. Since $\X$ and $\D_{1,1}$ are smooth, $\X'$ is smooth by \cite[1.3.b.(3)]{fmn}.
 One shows that this $\X'$ is a quotient of some smooth scheme $V'$ by a torus using the same argument that is used for $\Y$ in the remainder of the proof of Theorem \ref{thm:main2}. Now we replace $\X$, $V$, and $Z$ by $\X'$, $V'$, and the preimage of $[(Z\cap D_{1,1})/H]$ in $V'$ and repeat the argument, producing $D_{1,j}$, until $Z$ is empty. Then we replace $Z$ by the preimage of the \emph{original $Z$} and repeat the argument for $\K_2$, $\K_3$, etc.}

 Let $\M'_{i,j}$ denote the universal line bundles on the root stack $\Y$ for which
 \[
 (\M'_{i,j})^{\otimes n}=\pi^*\M_i^{\otimes n},
 \]
 and let
 \[
  \L_{i,j}:=\M'_{i,j}\otimes\pi^*\M_i^\vee.
 \]
 We show that the stabilizers of $\Y$ act faithfully on the fibers of $\bigoplus_{i,j}\L_{i,j}$.  Since the $\L_{i,j}$ are torsion, Proposition \ref{prop:quot<->bundle} will show that $\Y$ is a quotient of a smooth algebraic space by a finite diagonalizable group scheme. Moreover, since $\Y$ has separated coarse space $X$, it is separated, so the group action is proper. To see that the stabilizer actions are faithful, we show that the corresponding map $\Y\to B\GG_m^C$ is representable, where $C=\sum_i c_i$ is the total number of $s_{i,j}$.  Consider the following diagram in which the square is cartesian.
 \[\xymatrix{
  & \Y\ar[r]^-g\ar[d]_\pi  & [\AA^1/\GG_m]^C\ar[d]^{\hat{}n}\ar[r]^-u & B\GG_m^C\\
  B\GG_m^r & \X \ar[r]^-{f}\ar[l]_-m & [\AA^1/\GG_m]^C
 }\]
 The morphism $m$ corresponds to the $r$-tuple of line bundles $(\M_i)_i$, $u\circ g$ corresponds to $(\M'_{i,j})_{i,j}$, $f$ corresponds to the $(\D_{i,j})_{i,j}$, $u$ is the forgetful map which sends $C$ line bundles with sections to the underlying line bundles, and $\hat{}n$ sends $C$ line bundles with sections $(\E_i,t_i)$ to their $n$-th tensor powers $(\E^{\otimes n}_i,t^{\otimes n}_i)$.

 Proposition \ref{prop:quot<->bundle} shows that $m$ is representable. Lemma \ref{lem:representability-criteria}(\ref{criterion:geometric-fibers}) shows that $u$ is representable, as the pullback of $u$ under the universal $\GG_m^C$-torsor is $(\AA^1)^C$.  From the following cartesian diagram and the fact that the diagonal map $\Delta$ is representable, we see that $(\pi,g)$ is representable.
 \[\xymatrix{
 \Y\ar[r]^-{(\pi,g)}\ar[d] & \X\times [\AA^1/\GG_m]^C\ar[d]^{f\times\;\hat{}n}\\
 [\AA^1/\GG_m]^C\ar[r]^-{\Delta} & [\AA^1/\GG_m]^C\times[\AA^1/\GG_m]^C
 }\]
The morphism $(m\circ\pi,u\circ g)\colon \Y\to B\GG_m^r\times B\GG_m^C$ corresponding to $\bigl((\pi^*\M_i)_i, (\M'_{i,j})_{i,j}\bigr)$ is representable, as it is the composition of representable morphisms $(m,\id)\circ(\id,u)\circ(\pi,g)$.

 To show that $\ell\colon \Y\to B\GG_m^C$ corresponding to $(\L_{i,j})_{i,j}$ is representable, we may restrict to the residual gerbe of a geometric point $y$ of $\Y$ by Lemma \ref{lem:representability-criteria}(\ref{criterion:injective-stabilizers}). By Corollary \ref{cor:representability}(\ref{rep:property-P}), it then suffices to find a representable morphism $h\colon B\GG_m^C\to B\GG_m^{r+C}$ such that the composite $h\circ \ell$ agrees with $(m\circ \pi,u\circ g)$ at $y$ (i.e.~when restricted to the residual gerbe at $y$).

 Let $\widetilde\Z=\pi^{-1}(\Z)$. Suppose $y$ is a geometric point of $\Y$ in the complement of $\widetilde\Z$. The residual representations of the $\M_i$ are trivial on the complement of $\Z$, so the residual representations of the $\pi^*\M_i$ are trivial on the complement of $\widetilde\Z$, so $\L_{i,j}\cong \M_{i,j}'$ at $y$. Letting $h\colon B\GG_m^C\to B\GG_m^{r+C}$ be the morphism sending a $C$-tuple of line bundles $(\E_1,\dots, \E_C)$ to $(\O,\dots, \O,\E_1,\dots, \E_C)$, we see that $h$ is representable by Corollary \ref{cor:representability}(\ref{rep:section}), and that $h\circ \ell=(m\circ\pi,u\circ g)$ at $y$.

 Now suppose that $y$ is a geometric point of $\widetilde\Z$. For each $i$, there exists some $j_i$ such that $y$ is not contained in $\D_{i,j_i}$. We then have that the residual representation of $\M_{i,j_i}$ at $y$ is trivial, so $\pi^*\M_i \cong \L_{i,j_i}^\vee$ at $y$. Let $h_1\colon B\GG_m^C\to B\GG_m^r\times B\GG_m^C$ be the morphism given by sending the $C$-tuple $(\L_{i,j})_{i,j}$ to $\bigl((\L_{i,j_i}^\vee)_i,,(\L_{i,j})_{i,j}\bigr)$. Let $h_2\colon B\GG_m^r\times B\GG_m^C\to B\GG_m^r\times B\GG_m^C$ be given by sending $\bigl((\M_i)_i,(\L_{i,j})_{i,j}\bigr)$ to $\bigl((\M_i)_i,(\L_{i,j}\otimes \M_i)_{i,j}\bigr)$. Then $h_1$ and $h_2$ are representable by Corollary \ref{cor:representability}(\ref{rep:section}), so $h=h_2\circ h_1$ is representable. We have that $h\circ \ell=(m\circ\pi, u\circ g)$ at $y$, so $\ell$ is representable at $y$. 
 %sThis finishes the proof when $k$ is infinite .The case when $k$ is finite is similar, so indicate only where changes are necessary.
\end{proof}

As a consequence, we see that Question \ref{q:main} has an affirmative answer for simplicial toric varieties with tame singularities.
\begin{corollary}
\label{cor:main}
Every quasi-projective toric variety with tame quotient singularities over an algebraically closed field $k$ is of the form $U/G$, where $U$ is a smooth $k$-variety and $G$ is a finite abelian group.
\end{corollary}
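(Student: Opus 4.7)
The plan is to reduce directly to Theorem \ref{thm:main2}. First, I would observe that a toric variety with quotient singularities is necessarily simplicial, and any simplicial toric variety admits a presentation via the Cox construction as a quotient $X = V/H$ where $V$ is a smooth quasi-affine variety (an open subset of affine space, the complement of the irrelevant locus) and $H \subseteq \GG_m^{\Sigma(1)}$ is a diagonalizable subgroup (the kernel of the map of tori dual to $\ZZ^{\Sigma(1)} \to N$). The $H$-action on $V$ is proper with finite stabilizers precisely because the fan is simplicial, and the tameness hypothesis on the quotient singularities of $X$ translates to tameness of these stabilizer group schemes.

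Next, I would invoke Remark \ref{rmk:thmsec2=>thmintro}: since $X$ is quasi-projective, every coset of $n\cdot \pic(X) \subseteq \pic(X)$ contains a residually separable base-point free line bundle, because twisting any line bundle by a sufficiently high power of an ample line bundle in $n\cdot\pic(X)$ gives a very ample (hence base-point free, hence residually separable) line bundle. An algebraically closed field is infinite, so the hypotheses of Theorem \ref{thm:main2} are satisfied.

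Applying Theorem \ref{thm:main2} then yields a presentation $X = U/G$ with $U$ a smooth algebraic space and $G$ a finite tame diagonalizable group scheme over $k$ acting properly. By the remark immediately following Theorem \ref{thm:main2}, the morphism $U \to X$ is affine, so $U$ is in fact a variety. Finally, since $k$ is algebraically closed and $G$ is tame and finite diagonalizable, $G$ is a finite abelian constant group, as noted in the opening discussion of Section \ref{sec:main}.

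There is really no significant obstacle here beyond unwinding definitions; the only thing one might want to double-check is that the Cox presentation of a simplicial toric variety genuinely satisfies the hypotheses of Theorem \ref{thm:main2} (smoothness of $V$, diagonalizability of $H$, properness and finiteness of stabilizers), which are all standard features of the construction. The substantive work is all contained in Theorem \ref{thm:main2}.
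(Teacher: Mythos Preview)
Your proposal is correct and follows essentially the same approach as the paper: invoke the Cox construction to exhibit $X$ as $V/H$ with the requisite properties, then apply Theorem \ref{thm:main2}. You are simply more explicit than the paper in verifying the hypotheses and in tracking why the output $U$ is a variety and $G$ a constant abelian group.
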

\begin{proof}
  Since $X$ is a toric variety with quotient singularities, the Cox construction shows $X=V/H$, where $V$ is smooth and $H$ is a diagonalizable group scheme acting on $V$ with finite stabilizers (see \S\ref{subsec:lb} for a review of the Cox construction), so Theorem \ref{thm:main2} applies.
\end{proof}

\section{Explicit construction for toric varieties}\label{sec:explicit}

To emphasize that our proof of Theorem \ref{thm:main2} is constructive, we reinterpret it in the special case when $X$ is a toric variety. The end result is the procedure described in the following theorem, which we demonstrate in \S\ref{sec:ex}. We urge the reader to refer to \S\ref{sec:ex} immediately, as it clarifies the construction of $\hhat\Sigma$ and shows the simplicity of the procedure in practice.

\begin{theorem}\label{thm:tv}
 Let $X$ be a quasi-projective toric variety with tame quotient singularities over an infinite  field $k$. Let $\Sigma$ be the fan of $X$, let $Z\subseteq X$ be the singular locus, and let $X=V/H$ be the Cox construction of $X$ (see \S\ref{subsec:lb}).
 \begin{enumerate}
  \item[1.] There exist Weil divisors $D_1,\dots, D_r$ which generate the class groups of all torus-invariant open affine subvarieties of $X$. Letting $n_i$ be integers so that $n_i D_i$ is Cartier for each $i$, the $D_i$ can be chosen so that $n_i  D_i$ is very ample.
  \item[2.] There exist sections $\{s_{i,j}\}_{1\le j\le c_i}$ of $\O_X(n_i D_i)$ so that the preimages of the vanishing loci $\{V(s_{i,j})\}_{i,j}$ in $V$ are smooth and have simple normal crossings, and for each $i$, $\bigcap_j V(s_{i,j})$ is disjoint from $Z$.
  \item[3.] Let $W$ be the toric variety with fan $\hhat\Sigma$, as described in \S\ref{subsec:total-spaces}, and let $U_{i,j}\subseteq W$ be the $s_{i,j}$-cut together with its $\mu_{n_i}$-action (see Definition \ref{def:tv-cut}). Then the scheme-theoretic intersection $U=\bigcap_{i,j} U_{i,j}$ in $W$ is a smooth variety with an action of $G=\prod_i \mu_{n_i}^{c_i}$, such that $X\cong U/G$.
 \end{enumerate}
\end{theorem}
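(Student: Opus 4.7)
The plan is to specialize each step of the proof of Theorem \ref{thm:main2} to the toric setting, identifying the resulting abstract objects (torsion line bundles, root stack, and cover $W$) with explicit combinatorial toric data coming from the fan.

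First, for part 1, the torsion line bundles $\M_i$ on $\X=[V/H]$ appearing in the abstract proof will be identified with the line bundles $\O_\X(D_\rho)$ coming from the torus-invariant divisors indexed by the rays $\rho\in\Sigma(1)$. Their descents are precisely the torus-invariant Weil divisors $D_\rho$ on $X$, and the exact divisor sequence for toric varieties shows that they generate the class groups $\Cl(U_\sigma)$ of every torus-invariant affine open, as each such class group is a quotient of $\ZZ^{\Sigma(1)}$. The integers $n_i$ exist because the $H$-action on $V$ has finite stabilizers, hence the corresponding line bundles are torsion on $\X$. For the ampleness assertion, I would invoke Remark \ref{rmk:thmsec2=>thmintro}: after replacing each $D_i$ by $D_i+n_iE_i$ for a suitable multiple $E_i$ of a fixed very ample divisor on the quasi-projective $X$, the divisor $n_iD_i$ becomes very ample while its image in $\Cl(U_\sigma)/n_i\Cl(U_\sigma)$ is unchanged.

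For part 2, I would simply invoke the exact same Bertini argument used in the proof of Theorem \ref{thm:main2}. Since $k$ is infinite and each $n_iD_i$ is very ample, the base-point free line bundle $\O_X(n_iD_i)$ is residually separable, so \cite[Corollary 4.3]{bertini2} produces sections $s_{i,j}$ on an \'etale cover of $\X$ whose pullbacks to $V$ have smooth vanishing loci meeting the requisite simple normal crossings and disjointness conditions; these sections then descend to $X$ by toric invariance.

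For part 3, which carries the main content, the key point is to recognize the $n_i$-th root stack $\pi\colon\Y\to\X$ along $\D_{i,j}$ as a toric stack and to identify $W$ with its Cox-style cover. Each root operation adjoins a new ray to the fan $\Sigma$; iterating this over all pairs $(i,j)$ produces the refined fan $\hhat\Sigma$ of \S\ref{subsec:total-spaces}, and the corresponding toric variety $W$ is precisely the total space obtained as a $\GG_m$-bundle over the cover of $\X$ for each root line bundle. The $s_{i,j}$-cut $U_{i,j}$ realizes the zero locus of the tautological section of the universal root line bundle, so $U=\bigcap_{i,j}U_{i,j}$ is the common \'etale cover on which all universal roots trivialize. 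The $\mu_{n_i}$-actions assemble into the action of $G=\prod \mu_{n_i}^{c_i}$, and the smoothness and the isomorphism $X\cong U/G$ follow from the corresponding statements at the end of the proof of Theorem \ref{thm:main2}, applied to the torsion line bundles $\L_{i,j}$.

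The main obstacle is the identification in part 3: matching the abstractly defined root stack and the algebraic space cover on which its universal roots trivialize with the explicit toric data $\hhat\Sigma$, $W$, and the cuts $U_{i,j}$. This requires verifying that the toric operation of adjoining rays implements root stacks correctly in the Cox presentation and that the scheme-theoretic intersection of cuts corresponds to the fiber product of root-stack covers. Once this dictionary is established, smoothness follows from the smoothness of $\Y$ already proved via \cite[1.3.b.(3)]{fmn}, and the remaining statements are direct consequences of the abstract theorem.
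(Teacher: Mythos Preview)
Your treatment of parts 1 and 2 is essentially the paper's argument. One small correction: the integers $n_i$ are chosen so that $n_iD_i$ is Cartier on $X$, i.e.\ so that the image of $D_i$ in each local class group is $n_i$-torsion; this is not the same as $\O_\X(D_i)$ being torsion in $\pic(\X)=\Cl(X)$, which it generally is not.

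For part 3, however, there is a genuine gap in your plan. You propose to recognize the root stack $\Y$ as a toric stack obtained by ``adjoining rays'' to $\Sigma$ and to identify $W$ as its Cox-type cover. But the divisors $\D_{i,j}\subset\X$ along which one roots are the vanishing loci of \emph{generic} sections $s_{i,j}$; they are not torus-invariant, so $\Y$ is not a toric stack and there is no fan that describes it. Likewise $U$ is not an \'etale cover of $\X$: the map $U\to\X$ is ramified along the $\D_{i,j}$ (what is \'etale is the $G$-torsor $U\to\Y$).

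The paper's mechanism is different. The fan $\hhat\Sigma$ is not a refinement of $\Sigma$ at all; it lives in the larger lattice $\ZZ^C\oplus N$ and is the fan of the coarse space $W$ of the total space $\VV\bigl(\bigoplus_i\O_\X(D_i)^{\oplus c_i}\bigr)$ (Proposition \ref{prop:weiltotsp}). The link to root stacks comes from Lemma \ref{lem:pedroot}, which identifies the $n$-th root of $(\L^{\otimes n},s)$ with the $\mu_n$-quotient of the $s$-slice of $\VV(\L)$. The $s_{i,j}$-cuts $U_{i,j}\subset W$ are then exactly the coarse-space incarnations of these slices (Remark \ref{rmk:slices-cuts}), and their intersection $U$ is the fiber product of the slices over $\X$. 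One then checks via Remark \ref{rmk:slice-stabilizers} that the stabilizers of $U$ are trivial (using that $\bigcap_jV(s_{i,j})\cap Z=\varnothing$ and that the $\O_\X(D_i)$ have jointly faithful residual representations), so $U$ is an algebraic space; smoothness follows because $[U/G]\cong\Y$ is smooth. Thus the correct ``dictionary'' is not root-stack-fan combinatorics but rather total-space combinatorics together with the slice description of root stacks.
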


In \S\ref{subsec:lb} we review the Cox construction, which expresses every toric variety $X$ as a quotient $V/H$, where $V$ is smooth and $H$ is a diagonalizable group scheme.  We also give a non-stacky description of how to find line bundles $\M_1,\dots,\M_r$ on $\X=[V/H]$ such that the stabilizers of $\X$ act faithfully on the fibers of $\bigoplus\M_i$.  Finding such line bundles is the starting point in the proof of Theorem \ref{thm:main2}. In \S\ref{subsec:pedroot}, we give an explicit description of $n$-th root constructions of $n$-th tensor powers of line bundles.  In \S\ref{subsec:total-spaces}, we describe the fan $\hhat \Sigma$ which appears in Theorem \ref{thm:tv}.  Lastly, in \S\ref{subsec:tv} we put these results together to show how the proof of Theorem \ref{thm:main2} yields the procedure described in Theorem \ref{thm:tv}.

\subsection{Canonical stacks, line bundles, and divisors}
\label{subsec:lb}

\subsubsection*{Canonical Stacks}

If $X$ is a finite-type algebraic space, then \cite[2.9 and the proof of 2.8]{vistoli:intersection} shows $X$ has tame quotient singularities if and only if $X$ is the coarse space of a smooth tame Deligne-Mumford stack. Moreover, in this case there is a smooth tame Deligne-Mumford stack $X^\can$ with coarse space $X$ such that the coarse space morphism $X^\can\to X$ is an isomorphism away from codimension 2. We refer to $X^\can$ as \emph{the canonical stack} over $X$.

\begin{remark}[Universal property of canonical stacks {\cite[Theorem 4.6]{fmn}}]\label{rmk:canonical-universal-property}
  Suppose $\X$ is a smooth Deligne-Mumford stack with coarse space morphism $\pi\colon\X\to X$ which is an isomorphism away from a locus of codimension at least 2. Then $\X$ is universal (terminal) among smooth Deligne-Mumford stacks with trivial generic stabilizer and a dominant codimension-preserving morphism to $X$.
  
This universal property is stable under base change by \'etale morphisms (or any other codimension-preserving morphisms). That is, for $U\to X$ \'etale, $X^\can\times_X U$ is the canonical stack over $U$.
\end{remark}

\begin{remark}[Local structure of canonical stacks]\label{rmk:canonical-local-rings}
  Suppose $x\in X$ is a tame quotient singularity, and $U/G\to X$ is an \'etale morphism with $x$ in its image, so that $U$ is smooth and $G$ is a finite group. Let $u\in U$ be a preimage of $x$; we may assume $G$ fixes $u$, replacing $G$ by the stabilizer of $u$ if necessary. Let $P\subseteq G$ be the subgroup generated by \emph{pseudoreflections} through $u$, i.e.~subgroups which fix a divisor through $u$. The construction in \cite{vistoli:intersection} shows that $U/P$ is smooth (possibly only after replacing $U$ by an open neighborhood of $u$) and $G/P$ acts on $U/P$ without pseudoreflections. Therefore, in an \'etale neighborhood of $x$, the canonical stack of $X$ is given by $[(U/P)/(G/P)]$.

  On the level of formal local rings, we have $\hhat\O_{X,x}=k[[V]]^G$, where $V$ is a vector space (the tangent space at $u$) and $G$ is a tame finite group acting linearly on $V$. Then $k[[V]]^P\cong k[[W]]$ for a vector space $W$, $G/P$ acts on $W$ without pseudoreflections, and $X^\can\times_X\spec\hhat\O_{X,x}\cong [\spec k[[V]]^P/(G/P)]$.
\end{remark}

\begin{remark}[$\Cl(X)\cong \pic(X^\can)$]\label{rmk:canonical-pic=cl}
  Let $U\subseteq X$ be the smooth locus of $X$. Then $\pi\colon X^\can\to X$ is an isomorphism over $U$. Since the complement of $U$ in $X$ and the complement of the preimage of $U$ in $X^\can$ are of  codimension at least 2, we have a chain of isomorphisms
  \[
   \Cl(X)\cong \Cl(U)\cong \Cl(X^\can)\cong \pic(X^\can)
  \]
  where the last isomorphism follows from the fact that every Weil divisor on $X^\can$ is Cartier (one may check \'etale locally that an ideal sheaf is a line bundle, and $X^\can$ is smooth). So given a Weil divisor $D$ on $X$, it makes sense to speak of the associated line bundle $\O_{X^\can}(D)$ on $X^\can$.
\end{remark}

\subsubsection*{The Cox construction and characterization of jointly faithful line bundles}
Given a normal toric variety $X$ with no torus factors, the Cox construction \cite[\S5.1]{cls} produces an open subscheme $V$ of $\AA^n$ and a subgroup $H\subset\GG_m^n$ such that $X=V/H$. We briefly recall this construction.  If $\Sigma$ is the fan of $X$ and $\Sigma(1)$ denotes its set of rays, then consider the polynomial ring $k[x_\rho\mid\rho\in\Sigma(1)]$ with one variable for each ray. The fan $\Sigma$ for $X$ lives on a lattice $N$ and we have a map $\beta:\ZZ^{\Sigma(1)}\to N$ sending the standard basis vector $e_\rho$ to the first lattice point in $N$ along the ray $\rho$. Letting $(-)^*=\hom_\ZZ(-,\ZZ)$, we have an exact sequence
\[
N^* \xrightarrow{\beta^*} (\ZZ^{\Sigma(1)})^*\to \cok(\beta^*)\to 0
\]
Taking Cartier duals, we have an injection $H:=D(\cok(\beta^*))\to D((\ZZ^{\Sigma(1)})^*) \cong \GG_m^{\Sigma(1)}$. The Cox construction shows that $X=V/H$ where $V\subseteq \AA^{\Sigma(1)}$ is the complement of the closed subscheme defined by the ideal $(\prod_{\rho\notin\sigma}x_\rho\mid\sigma\in\Sigma)$. As an aside, we note that this short exact sequence above shows that $D(H)=\cok(\beta^*) = \Cl(X)$ by \cite[Lemma 5.1.1(a)]{cls}.
%\anton{Is this clear? One way to see it is to apply the proof of Proposition \ref{prop:inv-class-gp} and use the fact that the locus that's being removed is of codimension at least 2. We can just cite \cite[Lemma 5.1.1(a)]{cls}.}.
From this perspective, the $H$-action on $\AA^{\Sigma(1)}$ can be viewed as coming from the $\Cl(X)$-grading on the polynomial ring $k[x_\rho]$ sending $x_\rho$ to the element in $\Cl(X)$ defined by the divisor $D_\rho$.

If $X$ is a normal toric variety, then there is a unique torus $T'$ and toric variety $X'$ without torus factors such that $X\cong T'\times X'$ (though the isomorphism is non-canonical). Let $X'=V'/H$ be the Cox construction for $X'$, and let $V=T'\times V'$ with trivial $H$-action on the $T'$ factor. We will say $X=V/H$ is the Cox construction for $X$. Since the action of $H$ is free away from a locus of codimension at least 2, $\X=[V/H]=[V'/H]\times T'$ is the canonical stack over $X$ by Remark \ref{rmk:canonical-universal-property}.

\begin{lemma}\label{lem:lb}
 Suppose $D_1,\dots, D_r$ are Weil divisors of a toric variety $X$. The stabilizers of $\X=X^\can$ act faithfully on $\O_\X(D_1)\oplus\dots\oplus\O_\X(D_r)$ if and only if $D_1,\dots, D_r$ generate the Weil class group of every torus-invariant open affine subvariety of $X$.
\end{lemma}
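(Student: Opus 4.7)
The plan is to read off the stabilizers of $\X$ at geometric points directly from the Cox presentation $\X = [V/H]$, and then recognize the faithfulness condition as a class-group generation condition via Cartier duality for diagonalizable groups. Recall $V \subseteq \spec k[x_\rho \mid \rho\in\Sigma(1)]$ with $H = D(\Cl(X))$ acting on $x_\rho$ through the character $\chi_{[D_\rho]}$. A geometric point $x$ of $\X$ lifts to some $v = (v_\rho) \in V$, and its stabilizer in $H$ is the kernel of the joint character map $H \to \prod_{\rho:\,v_\rho \neq 0}\GG_m$, i.e.\ the subgroup cut out by the relations $\chi_{[D_\rho]} = 1$ for those $\rho$ with $v_\rho \neq 0$.

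Next I would translate this into class-group language. By the standard description of torus orbits in Cox coordinates, $\pi(v)$ lies in the orbit $O_\sigma$ for the unique cone $\sigma \in \Sigma$ with $\sigma(1) = \{\rho : v_\rho = 0\}$, so the stabilizer is $D(\Cl(X) / \langle [D_\rho] : \rho\notin\sigma(1)\rangle)$. The complement of $U_\sigma$ in $X$ is the union of the $D_\rho$ for $\rho \notin \sigma(1)$, so the excision sequence for divisor class groups identifies $\Cl(U_\sigma) = \Cl(X)/\langle [D_\rho] : \rho\notin\sigma(1)\rangle$; hence $\stab_\X(x) \cong D(\Cl(U_\sigma))$. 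Moreover, the standard identification of line bundles on $[V/H]$ with $\Cl(X)$-graded modules shows that $\stab_\X(x)$ acts on the fiber of $\O_\X(D_i)$ via the character corresponding to the image of $[D_i]$ in $\Cl(U_\sigma)$.

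Finally, the joint character morphism $\stab_\X(x) = D(\Cl(U_\sigma)) \to \GG_m^r$ given by the classes $[D_i]|_{U_\sigma}$ is injective as a map of group schemes if and only if the Cartier-dual map of character lattices $\ZZ^r \to \Cl(U_\sigma)$ is surjective, i.e.\ iff the $[D_i]|_{U_\sigma}$ generate $\Cl(U_\sigma)$. Since every torus-invariant open affine subvariety of $X$ equals $U_\sigma$ for a unique $\sigma \in \Sigma$, and every such $\sigma$ is realized by a geometric point of $\X$ (any point of $O_\sigma$), the equivalence follows by quantifying over $\sigma$. The main technical care is in verifying the two identifications invoked above, namely the compatibility of the stabilizer character group with $\Cl(U_\sigma)$ (which is the toric excision computation) and of the fiber representation with restriction of divisor classes (which follows from the Cox-theoretic dictionary between $\Cl(X)$-graded line bundles on $V$ and line bundles on $\X$); the remaining content is formal manipulation of diagonalizable groups.
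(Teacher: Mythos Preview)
Your proposal is correct and rests on the same core mechanism as the paper's proof: for diagonalizable groups, faithfulness of the action on a direct sum of lines is Cartier-dual to surjectivity of the character map $\ZZ^r\to D(\stab)$. The organization differs. The paper first restricts to a torus-invariant affine open $U_\sigma$ (using that $\X\times_X U_\sigma$ is the canonical stack over $U_\sigma$), strips any torus factor, and then notes that in the pointed affine case every stabilizer is a subgroup of the stabilizer at the origin, which is all of $H=D(\Cl(U_\sigma))$; so only that one point needs to be checked, and the fiber representation there is visibly the character $[D_i]\in\Cl(U_\sigma)$. You instead stay with the global Cox presentation and compute the stabilizer at each orbit $O_\sigma$ directly in Cox coordinates, identifying it with $D(\Cl(U_\sigma))$ via the divisor-excision sequence, and then quantify over $\sigma$. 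Your route is more explicit about the orbit-by-orbit picture and avoids the reduction-to-affine step, at the cost of invoking excision and the Cox dictionary for fiber characters; the paper's route is shorter once the affine reduction is made, since the origin already carries the full group $H$ and no further stabilizer computation is needed.
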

\begin{proof}
 If $U$ is a torus-invariant open affine subvariety of $X$, then $\X\times_X U$ is the canonical stack over $U$ by Remark \ref{rmk:canonical-universal-property}. We may therefore assume $X$ is an affine toric variety. We may prove the lemma after removing a common torus factor from both $X$ and $\X$, so we may further assume $X$ has no torus factor.

 When $X$ is affine with no torus factor, $V=\spec k[x_\rho]$ in the Cox construction described above. In this case, every stabilizer of $\X$ is a subgroup of the stabilizer of the origin in $V$, so we need only show that the stabilizer of the origin, namely $H$, acts faithfully on the fiber of $\bigoplus\O_\X(D_i)$. For a divisor $D$, $H$ acts on the fiber of $\O_\X(D)$ by the character given by the image of $D$ in $\Cl(X)=D(H)$. Therefore, the action of $H$ on the fiber of $\bigoplus\O_\X(D_i)$ has a kernel (factors through a proper quotient) if and only if the images of the $D_i$ in $\Cl(X)$ are contained in a proper subgroup.
\end{proof}
\begin{remark}
 Lemma \ref{lem:lb} implies the following global (but weaker) criterion. If $D_1,\dots, D_r$ generate the quotient of the Weil class group of $X$ by the Cartier class group of $X$, then they generate the Weil class group  of every open affine torus-invariant subvariety $U$ of $X$, and so the stabilizers of $\X$ act faithfully on $\bigoplus \O_\X(D_i)$. To see this, note that the restriction morphism $\Cl(X)\to \Cl(U)$ is surjective, since taking the closure of a Weil divisor on $U$ provides a section. This map sends Cartier divisors to the identity, since Cartier divisors on $U$ are all trivial. We therefore get a surjection $\Cl(X)/\CaCl(X)\to \Cl(U)$.
\end{remark}

\subsection{An alternative description of \texorpdfstring{$n$}{n}-th roots of \texorpdfstring{$n$}{n}-th powers}
\label{subsec:pedroot}

Suppose $X$ is a scheme (or stack), $\L$ is a line bundle on $X$, and $s$ is a global section of $\L^{\otimes n}$.  Informally, we will show that the $n$-th root stack of $\L^{\otimes n}$ along $s$ is the quotient stack $[U/\mu_n]$, where $U\hookrightarrow \VV(\L)$ is the closed subscheme (or closed substack) of $\VV(\L)$ cut out by the equation $t^n=s$, where the $\mu_n$ acts on the ``coordinate'' $t$ with weight 1.

A line bundle $\M$ on $X$ corresponds to a morphism $X\to B\GG_m$. With this correspondence, we have that $\VV(\M) = X\times_{B\GG_m}[\AA^1/\GG_m]$. Sections of $\M$ correspond to sections of the projection $\VV(\M)\to X$, and therefore to maps to $[\AA^1/\GG_m]$ over the given map to $B\GG_m$.

\begin{definition}\label{def:vb-slice}
  For a line bundle $\L$ on $X$ and section $s$ of $\L^{\otimes n}$, we define the \emph{$s$-slice of $\VV(\L)$} to be $X\times_{\VV(\L^{\otimes n})}\VV(\L)$, where $X\to \VV(\L^{\otimes n})$ is given by $s$, and $\hat{}n\colon \VV(\L)\to \VV(\L^{\otimes n})$ is the $n$-th power map. Note that the $\mu_n$-action on $\VV(\L)$ over $\VV(\L^{\otimes n})$ induces a $\mu_n$-action on the $s$-slice over $X$.
\end{definition}

\begin{lemma}
\label{lem:pedroot}
  Let $\L$ be a line bundle on $X$, $s$ a section of $\L^{\otimes n}$, and $U$ the $s$-slice of $\VV(\L)$. The $n$-th root stack of $\L^{\otimes n}$ along $s$ is the quotient stack $[U/\mu_n]$.
\end{lemma}
\begin{proof}
  The result follows from the fact that the following diagram is cartesian:
  \[\xymatrix{
  U\ar[r]\ar[d] & \VV(\L)\ar[d]^{\mu_n\text{-torsor}}\\
  \sqrt[n]{(\L^{\otimes n}, s)/X}\ar[r]\ar[d] & [\VV(\L)/\mu_n]\ar[r]\ar[d] & [\AA^1/\GG_m]\ar[d]^{\hat{}n}\\
  X\ar[r]^{s} & \VV(\L^{\otimes n})\ar[r] & [\AA^1/\GG_m]
  }\]
  To see that $\VV(\L^{\otimes n})\times_{[\AA^1/\GG_m]}[\AA^1/\GG_m]\cong [\VV(\L)/\mu_n]$, we show that the two represent the same fibered category. For a scheme $T$, a $T$-point of the fiber product is given by a tuple $\bigl(f\colon T\to X, \M\in \pic(T), m\in \Gamma(T,\M), \phi\colon \M^{\otimes n}\cong f^*\L^{\otimes n}\bigr)$. Specifying $\M$ and $\phi$ is equivalent to specifying the $n$-torsion bundle $\M\otimes f^*\L^\vee$, which is equivalent to specifying the $\mu_n$-torsor $\pi\colon P\to T$ on which $\pi^*\M$ is naturally isomorphic to $\pi^*f^*\L$. Specifying the section $m$ is then equivalent to specifying the $\mu_n$-invariant section $\pi^*m$ of $\pi^*\M=\pi^*f^*\L$. But a $T$-point of $[\VV(\L)/\mu_n]$ is given precisely by a map $f\colon T\to X$, a $\mu_n$-torsor $\pi\colon P\to T$, and a $\mu_n$-invariant section of $\pi^*f^*\L$.
\end{proof}

\begin{remark}
 Given this description of $n$-th root stacks of $n$-th tensor powers of line bundles, we see that the proof of Theorem \ref{thm:main2} can be interpreted as an application of the philosophy in \cite{kv}. The basic idea of \cite{kv} is as follows. Start with a smooth Deligne-Mumford stack $\X$ and a vector bundle $\E$ on $\X$ so that the stabilizers act faithfully on fibers at geometric points. The goal is to produce a smooth scheme $U$ with a finite surjection onto $\X$. The codimension of the stacky locus in the fibers of the total space of $\E^{\oplus k}$ is large for large $k$, so after repeated slicing by hyperplanes, one can find a slice $U$ of the total space which is smooth, misses the stacky locus, and is finite over $\X$.\footnote{Instead of the total space of $\E^{\oplus k}$, \cite{kv} uses a large fiber product of the projective bundle of $\E$. This is necessary to ensure that enough sections exist. In our situation, we twist by a large power of a very ample line bundle instead.
 %\anton{can we do the same thing to apply ordinary Bertini as well?}
 }

 In our situation, we aim to produce a smooth scheme $U$ and a finite surjection onto $\X$, which is generically a torsor under some finite group. Our strategy is to start with line bundles $\L_1,\dots, \L_r$ so that the stacky locus of the total space of $\bigoplus \L_i$ has positive codimension in each fiber (this is the condition that the stabilizers act faithfully on the fibers). Then the stacky locus of the total space of $\bigoplus \L_i^{\oplus c_i}$ will have large codimension in each fiber. We then choose slices of this total space which have $\mu_n$-actions so that the intersection of all of the slices is smooth, misses the stacky locus, and is generically a $\mu_n^C$-torsor over $\X$.
 \hfill$\diamond$
\end{remark}

\begin{remark}\label{rmk:slice-stabilizers}
  Suppose $X$ is a stack and $u\in U$ is a geometric point of the $s$-slice of $\VV(\L)$ lying over a point $x\in X$ where $s$ does not vanish. Then $\stab_U(u)$ is given by the kernel of the action of $\stab_X(x)$ on the fiber of $\L$. To see this, first note that $u$ maps to the complement of the zero section in $\VV(\L)$. Since $U\to\VV(\L)$ is a closed immersion, and $\VV(\L)\setminus\{0\}\to\VV(\L)$ is an open immersion, we see $\stab_U(u)=\stab_{\VV(\L)\setminus\{0\}}(u)$. Let $f:X\to B\GG_m$ be the morphism corresponding to $\L$. From the cartesian diagram
  \[\xymatrix{
    \VV(\L)\setminus\{0\}\ar[d]\ar[r] & \ast\ar[d]\\
    X\ar[r]^f & B\GG_m
  }\]
  we see that $\stab_{\VV(\L)\setminus\{0\}}(u) = \stab_X(x)\times_{\GG_m}1$ is the kernel of the action of $\stab_X(x)$ on the fiber $\L(x)$.
\end{remark}

\subsection{Cuts of the coarse space of \texorpdfstring{$\VV\bigl(\bigoplus\O_{X^\can}(D_i)\bigr)$}{V(sum(O(D\_i)))}}\label{subsec:total-spaces}
Returning to the notation from the start of \S\ref{sec:explicit}, in which $X$ is a toric variety with fan $\Sigma$, let $D_1,\dots, D_r$ be (not necessarily distinct) divisors on $X$. For each $i$, suppose $D_i$ is linearly equivalent to $\sum_\rho c_{i\rho}D_\rho$, where $D_\rho$ is the divisor associated to the ray $\rho$ of $\Sigma$. Given a ray $\rho$ of $\Sigma$, let $\lambda_\rho\in N$ be the first lattice point along $\rho$, and define $\hat\rho$ to be the ray in $\ZZ^r\oplus N$ spanned by $(-\sum_i c_{i\rho}e_i, \lambda_\rho)$. Given a cone $\sigma$ of $\Sigma$, define $\hat\sigma$ to be the cone generated by $\{e_1,\dots, e_r\}$ and $\{\hat\rho|\rho$ a ray of $\sigma\}$.  Let $\hhat\Sigma$ be the fan on $\ZZ^r\oplus N$ generated by the cones $\{\hat\sigma|\sigma\in \Sigma\}$. Let $W$ be the toric variety with fan $\hhat\Sigma$, and let $D_i'$ be the divisor in $W$ corresponding to the ray $e_i$. Let $p\colon W\to X$ be the morphism induced by the projection $\hhat\Sigma\to\Sigma$.

This generalizes the usual toric construction of the total space of the sum of line bundles. Informally, $W$ can be thought of as the joint total space of a collection of Weil divisors. This is made precise by the following proposition.

\begin{proposition} \label{prop:weiltotsp}
  Let $\L_i=\O_{X^\can}(D_i)$, as defined in Remark \ref{rmk:canonical-pic=cl}. Then $W$ is the coarse space of $\VV(\bigoplus \L_i)$.
\end{proposition}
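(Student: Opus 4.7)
The plan is to compare both sides via their Cox presentations. After removing any torus factor from $X$ (which appears compatibly on both sides), I may assume $X$ has no torus factors, so that $X = V/H$ is the Cox construction with $V \subseteq \AA^{\Sigma(1)}$ open and $H = D(\Cl(X))$. Via the identification $\Cl(X) = \hom(H, \GG_m)$, each line bundle $\L_i = \O_\X(D_i)$ on $\X = [V/H]$ corresponds to the character $[D_i] \in \hom(H, \GG_m)$, so
\[
  \VV\Bigl(\bigoplus_{i=1}^r \L_i\Bigr) \;\cong\; [(V \times \AA^r)/H],
\]
with $H$ acting diagonally on $\AA^r$ through the characters $-[D_i]$ (the sign coming from the convention $\VV(\L) = \Spec \sym \L^\vee$ used in \S\ref{subsec:pedroot}). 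Its coarse space is therefore the toric quotient $(V \times \AA^r)/H$.

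Next I would read off the Cox presentation of $W$ directly from the fan $\hhat\Sigma$. Two observations are needed. First, because every maximal cone $\hat\sigma$ of $\hhat\Sigma$ contains all of the rays $e_1,\dots,e_r$ by construction, the irrelevant ideal of $\hhat\Sigma$ is generated by the same monomials $\prod_{\rho \notin \sigma(1)} x_\rho$ as for $\Sigma$, with none of the new coordinates $y_i$ appearing; hence the Cox variety of $W$ is exactly $V \times \AA^r$. Second, the exact sequence $0 \to M \oplus \ZZ^r \to \ZZ^{\hhat\Sigma(1)} \to \Cl(W) \to 0$ contains the relations $[D_i'] + \sum_\rho c_{i,\rho}[D_{\hat\rho}] = 0$ coming from the $e_i$-basis of $\ZZ^r$; eliminating the $[D_i']$ recovers the standard presentation of $\Cl(X)$ in terms of its rays. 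This yields a canonical isomorphism $\Cl(W) \cong \Cl(X)$ under which $[D_{\hat\rho}] \leftrightarrow [D_\rho]$ and $[D_i'] \leftrightarrow -[D_i]$, so $D(\Cl(W)) = H$.

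Putting these together, the Cox $H$-action on $V \times \AA^r$ acts on the $x_\rho$ via $[D_\rho]$ and on the $y_i$ via $-[D_i]$, which is exactly the action appearing in the first paragraph. Thus $W = (V \times \AA^r)/H$ coincides with the coarse space of $\VV(\bigoplus \L_i)$, as required. The main obstacle is essentially bookkeeping: carefully verifying the claimed shape of the irrelevant ideal of $\hhat\Sigma$ (which rests on the combinatorial fact that each $\hat\sigma$ contains all the $e_i$) and keeping straight the sign conventions between the $\VV(-)$ construction and the Cox-theoretic $H$-action. The torus-factor reduction is a minor technicality, since both $W$ and the formation of coarse spaces respect products with a torus.
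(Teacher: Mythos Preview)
Your proposal is correct and follows essentially the same route as the paper's proof: both express $\VV(\bigoplus \L_i)$ as $[\hhat V/H]$ with $\hhat V = V\times\AA^r$, then identify this with the Cox presentation of $W$ by observing that every maximal cone of $\hhat\Sigma$ contains all the $e_i$ (so the irrelevant locus is the same as for $\Sigma$) and that the $H$-actions agree. You are more explicit about the class-group computation where the paper simply says ``it is straightforward to see that the $H$-actions agree,'' but the argument is the same.
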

\begin{proof}
  Any torus factor in $X$ appears as torus factor in $W$, so we may assume $X$ has no torus factor. Let $\pi:V\to X^\can$ be the $H$-torsor from the Cox construction. By descent, the data of the line bundle $\L_i$ is equivalent to the data of the line bundle $\pi^*\L_i$ with an $H$-linearization. Since $V$ is an open subset of $\AA^{\Sigma(1)}$, $\pi^*\L_i$ is trivial for each $i$. Letting $\hhat V=\VV(\bigoplus \pi^*\L_i)\subseteq \AA^{\Sigma(1)}\times\AA^r=\AA^{\hhat\Sigma(1)}$, we have that $\VV(\bigoplus \L_i)=[\hhat V/H]$, where the action of $H$ on the coordinate corresponding to $\L_i$ is given by $D_i\in \Cl(X)=D(H)$. We will show that this quotient structure naturally agrees with the Cox construction of $W$.

  Every maximal cone of $\hhat\Sigma$ contains all the $e_i$, so the open subset of $\AA^{\hhat\Sigma(1)}$ in the Cox construction of $W$ is determined by sets of rays $\{\hat\rho_1,\dots,\hat\rho_m\}$ which do not lie on a single cone of $\hhat\Sigma$. Such a subset of rays fails to lie on a single cone of $\hhat\Sigma$ if and only if the corresponding subset $\{\rho_1,\dots, \rho_m\}$ fail to lie on a single cone of $\Sigma$. This shows that $\hhat V$ is the same open subset of $\AA^{\hhat\Sigma(1)}$ as in the Cox construction of $W$.
  
  It remains to check that the $H$-action described above agrees with the group action from the Cox construction of $W$. We have a commutative diagram with exact rows
  \[\xymatrix{
    0\ar[r] & N^* \ar[r]^-{\beta^*}\ar[d]^{\iota_1} & (\ZZ^{\Sigma(1)})^*\ar[r]\ar[d]^{\iota_2} & \cok(\beta^*)\ar[r]\ar[d]^{\iota_3} & 0\\
    0\ar[r] & (\ZZ^r)^*\oplus N^* \ar[r]^-{\hat\beta^*} & (\ZZ^r)^*\oplus (\ZZ^{\Sigma(1)})^*\ar[r] & \cok(\hat\beta^*)\ar[r] & 0
  }\]
  where exactness on the left follows from the fact that $X$ has no torus factors (so $\Sigma(1)$ spans $N$). The maps $\iota_1$ and $\iota_2$ are given by inclusions $x\mapsto (0, x)$, and $\hat\beta^*$ induces the identity map on the cokernels, $(\ZZ^r)^*\to (\ZZ^r)^*$, so by the Snake Lemma, $\iota_3$ is an isomorphism. That is, $\cok(\hat\beta^*)\cong \cok(\beta^*)=D(H)$, so the group acting on $\AA^{\hhat\Sigma(1)}$ in the Cox construction of $W$ is $H$. The images of the generators of $(\ZZ^{\Sigma(1)})^*$ (resp.~$(\ZZ^r)^*\oplus(\ZZ^{\Sigma(1)})^*$) in $D(H)$ give the character with which $H$ acts on the corresponding coordinate of $\AA^{\Sigma(1)}$ (resp.~$\AA^r\times \AA^{\Sigma(1)}$). Commutativity of the right square shows that the action of $H$ on the coordinate corresponding to $\hat\rho$ is the same as the action of $H$ on the coordinate corresponding to $\rho$ in $\hhat V$. Finally, since $\hat\beta^*(e_i)=(e_i, -\sum_i c_{i\rho}\lambda_\rho)$ is in the kernel of the map to $D(H)$, we see that the action of $H$ on the coordinate corresponding to $e_i$ is equal to $\sum_i c_{i\rho} D_\rho = D_i\in \Cl(X)=D(H)$.
%    \[\xymatrix{
%   \ZZ^r\oplus N\ar[r] & N\\
%   \llap{$\ZZ^{\hhat\Sigma(1)}=\;$}\ZZ^r\oplus\ZZ^{\Sigma(1)}\ar[r]\ar[u]^{\hat\beta} & \ZZ^{\Sigma(1)}\ar[u]_\beta
%  }\]
% where $\beta$ and $\hat\beta$ are as in the Cox construction subsection above. We then have a commutative diagram with exact rows
% \[\xymatrix{
%   0\ar[r] & (\ZZ^r)^*\oplus N^* \ar[r]^-{\hat\beta^*} & (\ZZ^r)^*\oplus (\ZZ^{\Sigma(1)})^*\ar[r] & \cok(\hat\beta^*)\ar[r] & 0\\
%   0\ar[r] & N^* \ar[r]^-{\beta^*}\ar[u] & (\ZZ^{\Sigma(1)})^*\ar[r]\ar[u] & \cok(\beta^*)\ar[r]\ar[u] & 0\\
%   }
% \]
%   Exactness on the left follows from the fact that $\beta$ and $\hat\beta$ have finite cokernel\anton{assumes $X$ has no torus factor, but we can always reduce to this case}. Since the restriction of $\hat\beta^*$ to $(\ZZ^r)^*$ is the identity map $(\ZZ^r)^*\to (\ZZ^r)^*$, the Snake Lemma shows that $\cok(\beta^*)\to\cok(\hat\beta^*)$ is an isomorphism. Since $H=D(\cok(\beta^*))$ and the map $(\ZZ^{\Sigma(1)})^* \to \cok(\beta^*)=\Cl(X)$ sends $e^*_\rho$ to the divisor class of $D_\rho$, the lemma follows.
\end{proof}

Now suppose $n_iD_i$ is a very ample Cartier divisor on $X$, with corresponding lattice polyhedron $P_i = \{x\in N^\vee\otimes \RR| \langle x,\rho\rangle \ge -n_ic_{i\rho}$ for all $\rho\in\Sigma(1)\}$. Consider the polyhedron
\[
  P_i' = \{x\in (\ZZ^r\oplus N)^\vee\otimes \RR| \langle x,\hat\rho\rangle\ge -n_ic_{i\rho}\text{ for all }\rho\in\Sigma(1), \langle x,e_j\rangle=0\text{ for }j\neq i\text{, and }\langle x,e_i\rangle \ge 0\}.
\]
The lattice points in $P_i'$ correspond to torus semi-invariant sections of $\O_W(p^*(n_iD_i))$ which do not vanish along any $D_j'$ with $j\neq i$. We make two observations:
\begin{enumerate}
  \item There is a natural identification of $P_i$ with $\{x\in P_i'|\langle x,e_i\rangle =0\}$. Geometrically, pullback induces an isomorphism between the sections of $\O_X(n_iD_i)$ and the sections of $\O_W(p^*(n_iD_i))$ which are linear combinations of torus semi-invariant sections not vanishing along any of the $D_j'$.
  \item There is a point $x\in P_i'$ with $\langle x,e_i\rangle=n_i$ and $\langle x,\hat\rho\rangle=0$ for all $\rho\in\Sigma(1)$. This follows immediately from the fact that $n_iD_i'$ is linearly equivalent to $p^*(n_iD_i)$, which is true by the construction of $\hhat\Sigma$. Geometrically, this means that there is a section of $\O_W(p^*(n_iD_i))$ which induces the divisor $n_iD_i'$. We will denote this section by $t_i$.
\end{enumerate}
Concretely, $P_i'$ is a pyramid of height $n_i$ with base $P_i$. The apex of the pyramid is the lattice point corresponding to $t_i$. If $s$ is a linear combination of torus semi-invariant sections corresponding to the lattice points in $P_i$, then $p^*(s)$ is ``the same'' linear combination of the torus semi-invariant sections corresponding to the lattice points in the base of $P_i'$.

\begin{definition}\label{def:tv-cut}
  Suppose $s$ is a section of $\O_X(n_iD_i)$. The \emph{$s$-cut of $W$} is the vanishing locus of the section $t_i-p^*(s)$ of $\O_W(p^*(n_iD_i))$.
\end{definition}
\begin{remark}[$\mu_{n_i}$-action on the cut]
  The 1-parameter subgroup corresponding to $e_i$ acts with weight 0 on $p^*(s)$ and with weight $n_i$ on $t_i$, so the $s$-cut of $W$ is invariant under the action of $\mu_{n_i}$.
\end{remark}

\begin{remark}[compatibility of terminology: slices vs.~cuts]\label{rmk:slices-cuts}
  The pyramid of height 1 and base $P_i$ corresponds to (a compactification of) the total space of the line bundle $\O_X(n_iD_i)$. For a section $s$ of $\O_X(n_iD_i)$, the corresponding closed subscheme $X\hookrightarrow \VV(\O_X(n_iD_i))$ is given by the difference of the torus semi-invariant section corresponding to the apex and the pullback of $s$. The $n_i$-th power map $\VV(\O_{X^\can}(D_i))\to \VV(\O_{X^\can}(n_iD_i))$ is induced by vertically scaling the pyramid by a factor of $\frac 1{n_i}$. Combining Proposition \ref{prop:weiltotsp} with the fact that coarse space morphisms are stable under base change, we see that the $s$-cut of $W$ is the coarse space of $U\times \VV(\bigoplus_{j\neq i}\L_i)$, where $U$ is the $s$-slice of $\VV(\L_i)$.
\end{remark}

\subsection{Proof of Theorem \ref{thm:tv}}
\label{subsec:tv}
We now prove the explicit procedure described for toric varieties. We follow the proof of Theorem \ref{thm:main2}, taking into account the results in \S\S\ref{subsec:lb}--\ref{subsec:total-spaces}. Let $X=V/H$ be the Cox construction of $X$, so $\X=[V/H]$ is the canonical stack of $X$. Let $\phi\colon \X\to X$ be the coarse space map, which is an isomorphism away from the singular locus $Z\subseteq X$.

(1) The irreducible torus-invariant divisors of $X$ are $\QQ$-Cartier and clearly generate the class groups of all torus-invariant open affine subvarieties. The $n_iD_i$ can be assumed to be very ample since one can add an arbitrary ample divisor to each $D_i$ without changing the fact that the $D_i$ generate the class groups of all torus-invariant open subvarieties. By Lemma \ref{lem:lb}, the stabilizers of $\X$ act faithfully on the fibers of $\bigoplus \O_\X(D_i)$.

(2) The $s_{i,j}$ may be chosen to have the given properties by the same Bertini argument used in the proof of Theorem \ref{thm:main2}.

(3) Let $\U_{i,j}\subseteq \VV(\O_\X(D_i))$ be the $s_{i,j}$-slice together with the action of $\mu_{n_i}$ (Definition \ref{def:vb-slice}). Let $U\subseteq \VV(\bigoplus_i\O_\X(D_i)^{\oplus c_i})$ be the fiber product of the $\U_{i,j}$ over $\X$, and let $\pi\colon U\to \X$ be the projection. The stacky locus of $\VV(\bigoplus_i\O_\X(D_i)^{\oplus c_i})$ lies over $Z$. Combining Remark \ref{rmk:slice-stabilizers} with the fact that $\bigcap_j V(s_{i,j})$ is disjoint from $Z$, we see that for any geometric point $u$ of $U$, $\stab_U(u)$ must be contained in the kernel of the action of $\stab_\X(\pi(u))$ on $(\bigoplus \O_\X(D_i))_{\pi(u)}$. Since this action is faithful, the stabilizers of geometric points of $U$ are trivial, so $U$ is an algebraic space.

By Lemma \ref{lem:pedroot}, the stack $\U$, given by taking the $n_i$-th root of $s_{i,j}$ for each $i$ and $j$, is isomorphic to $[U/\prod_i\mu_{n_i}^{c_i}]$. By Proposition \ref{prop:weiltotsp}, the coarse space of $\VV(\bigoplus_i\O_\X(D_i)^{\oplus c_i})$ is $W$, and by Remark \ref{rmk:slices-cuts}, the intersection of the $s_{i,j}$-cuts of $W$ (Definition \ref{def:tv-cut}) is $U$.

Since the preimages of $V(s_{i,j})$ in $\X$ are smooth and have simple normal crossings, the root stack $\U$ is smooth, so $U$ is smooth. We therefore have that $U$ is a smooth scheme with an action of $\prod \mu_{n_i}^{c_i}$ such that $X\cong U/\prod \mu_{n_i}^{c_i}$.

\section{Example: blow-up of \texorpdfstring{$\PP(1,1,2)$}{P(1,1,2)}}
\label{sec:ex}
In this section, we run through the procedure described in Theorem \ref{thm:tv} when $X$ is weighted projective space $\PP(1,1,2)$ blown-up at a smooth torus-invariant point.  The fan of $X$ is illustrated below.
\[\begin{tikzpicture}
 \clip (-1.5,-1.5) rectangle (1.5,1.5);
 % draw a bunch of cones
 \foreach \x/\y/\z/\w in {9/0/0/9, 0/9/-9/9, -9/9/-9/-9, -9/-9/9/0}
   \filldraw [draw=black,fill=lightgray] (\x,\y) -- (0,0) -- (\z,\w);
 \draw[help lines] (-10,-10) grid (10,10);
 \foreach \x/\y/\z/\w in {9/0/0/9, 0/9/-9/9, -9/9/-9/-9, -9/-9/9/0}
   \draw [thick] (\x,\y) -- (0,0) -- (\z,\w);
 \foreach \x/\y/\dottext in {1/0/1, 0/1/2, -1/1/3, -1/-1/4}
   \filldraw[draw=black,fill=white] (\x,\y) node {$\dottext$} circle (6pt);
 \end{tikzpicture}
\]
As mentioned in the introduction, this example is interesting because the map $U\to X$ of Corollary \ref{cor:main} is not toric:
\begin{proposition}
\label{prop:notquot}
There is no smooth toric variety $U$ and finite subgroup $G$ of the torus of $U$ such that $X=U/G$.
\end{proposition}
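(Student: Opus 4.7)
The plan is to analyze what a toric presentation $X=U/G$ would require combinatorially. Suppose $U$ is a smooth toric variety with torus $T_U$ and $G\subseteq T_U$ a finite subgroup such that $X=U/G$. The quotient map $T_U\twoheadrightarrow T_X$ dualizes on cocharacter lattices to an inclusion $N_U\hookrightarrow N_X$ of finite index $|G|$. Since $U\to X$ is a finite surjection (being a geometric quotient by a finite group), no torus orbits are contracted, so the fan $\Sigma_U$ consists of exactly the same cones as $\Sigma_X$ in $N_X\otimes\RR$, but considered with respect to the finer lattice $N_U$.

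Label the primitive ray generators of $\Sigma_X$ in $N_X$ by $\lambda_1=(1,0)$, $\lambda_2=(0,1)$, $\lambda_3=(-1,1)$, $\lambda_4=(-1,-1)$, and let $\mu_i=n_i\lambda_i\in N_U$ be the primitive generator of $\rho_i$ in $N_U$, where $n_i$ is a positive integer. The maximal cone $\sigma_{ij}$ generated by $\rho_i,\rho_j$ is smooth in $U$ precisely when $\mu_i,\mu_j$ form a $\ZZ$-basis of $N_U$, equivalently when $|\det_{N_X}(\mu_i,\mu_j)|=[N_X:N_U]=|G|$, i.e.\ $n_i n_j|\det_{N_X}(\lambda_i,\lambda_j)|=|G|$.

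A direct computation gives $|\det_{N_X}(\lambda_1,\lambda_2)|=|\det_{N_X}(\lambda_2,\lambda_3)|=|\det_{N_X}(\lambda_4,\lambda_1)|=1$ and $|\det_{N_X}(\lambda_3,\lambda_4)|=2$. Smoothness of $U$ at the four maximal cones therefore yields the system
\[
n_1n_2\;=\;n_2n_3\;=\;n_4n_1\;=\;|G|, \qquad 2n_3n_4\;=\;|G|.
\]
The first three equations force $n_1=n_3$ and $n_2=n_4$, so $n_3n_4=n_1n_2=|G|$, and the last equation becomes $2|G|=|G|$, a contradiction. Hence no such $U$ and $G$ exist.

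The one conceptual step worth verifying carefully is the identification of $\Sigma_U$ with $\Sigma_X$ as collections of cones in $N_X\otimes\RR$; this follows because a finite toric quotient affects only the lattice, not the combinatorial structure of the fan. The remainder is elementary arithmetic on the four smoothness constraints, with the obstruction coming from the unique singular cone $\sigma_{34}$ having determinant $2$ while all adjacent cones have determinant $1$.
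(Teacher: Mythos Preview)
Your proof is correct and takes essentially the same approach as the paper: both reduce to showing that no finite-index sublattice $N_U\subset N_X$ makes all four maximal cones of the fan simultaneously smooth. The paper fixes a basis of $N_U$ adapted to the smooth cone on $(1,0),(0,1)$ and then checks directly that the cone on $(-1,1),(-1,-1)$ becomes singular, whereas you package the same computation as four determinant equations $n_in_j|\det(\lambda_i,\lambda_j)|=|G|$ and extract an arithmetic contradiction; these are equivalent formulations of the same argument.
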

\begin{proof}
  We show that there is no finite toric morphism $f:U\to X$ with $U$ a smooth toric variety.  Let $\Sigma$ depicted above be the fan of $X$ with ambient lattice $N=\ZZ^2$. Let $\Sigma'$ be the fan of the hypothetical smooth toric variety $U$ with ambient lattice $N' \subseteq N$.  Let $\sigma\in\Sigma'$ be the convex cone with rays $(1,0)$ and $(0,1)$. Let $(a,0)$ and $(0,b)$ be the first lattice points of $N'$ on these rays. Since $\Sigma'$ is assumed to be smooth, these lattice points must form a basis for $N'$ and so $N'=a\ZZ\oplus b\ZZ$. Let $\tau\in\Sigma'$ be the cone generated by $(-1,1)$ and $(-1,-1)$.  We claim that $\tau$ is singular.  Indeed, the first lattice points of $N'$ on the rays of $\tau$ are $(-m, m)$ and $(-m,-m)$, where $m=ab/\gcd(a,b)$ is the least common multiple of $a$ and $b$. These do not form a basis for $N'$ since $(a,0)$ is not in their $\ZZ$-span.
\end{proof}

We now apply Theorem \ref{thm:tv} to $X$.

\textbf{(1: choose $D$)} Let $D_{\rho_i}$ be the divisor corresponding to the $i$-th ray. We seek an ample divisor $D=\sum a_i D_{\rho_i}$ which generates the class group of each torus-invariant open affine. The only non-trivial class group is $\ZZ/2$ (for the open neighborhood of the singular point), so we have the condition that $a_3+a_4$ must be odd. Recall that the polytope of $D$ is $\{x\in \RR^2|\langle x,\rho_i\rangle \ge -a_i\}$, that $D$ is ample if and only if the dual fan to its polytope is $\Sigma$, and that $D$ is Cartier if and only if the vertices of its polytope are lattice points. From these facts, we see that $D=D_{\rho_1}+D_{\rho_2}+D_{\rho_3}$ generates the class group of each torus-invariant open affine, and that $2D$ is Cartier and very ample (see its polytope pictured below).
\[\begin{tikzpicture}
  \clip (-2.5,-2.5) rectangle (1.5,2.5);
  \filldraw[fill=lightgray] (-2,-2) -- (-2,2) -- (1,-1) -- (0,-2) -- cycle;
  \draw[help lines] (-3,0) -- (3,0);
  \draw[help lines] (0,-3) -- (0,3);
  \foreach \x in {-2,-1,0,1,2}
    \foreach \y in {-2,-1,0,1,2}
       \draw plot[mark=*] coordinates{(\x,\y)};
 \foreach \x/\y/\dottext in {-2/-2/a, 1/-1/b, -2/2/c}
   \filldraw[draw=black,fill=white] (\x,\y) node {$\dottext$} circle (6pt);
 \draw (-2,-0.5) node[left] {1};
 \draw (-0.5,-2) node[below] {2};
 \draw (0.5,-1.5) node[below right=-0.5ex] {3};
 \draw (-0.5,0.5) node[above right=-0.5ex] {4};
\end{tikzpicture}
\]
The edges of this polytope correspond to the torus-invariant divisors, and the lattice points correspond to semi-invariant sections of $2D$. The section corresponding to a lattice point $v$ vanishes along $D_i$ to order $\langle v, \rho_i\rangle + a_i$ (the ``distance of the lattice point from the corresponding edge''), so when it is pulled back to the the open subset $V\subseteq \AA^4 = \spec k[x_1,x_2,x_3,x_4]$ in the Cox construction, it becomes $\prod_i x_i^{\langle v, \rho_i\rangle + a_i}$.

\textbf{(2: choose $s$)} We next choose a section $s$ of $\O(2D)$ whose vanishing locus is smooth and misses the singular point of $X$, the intersection of $D_{\rho_3}$ and $D_{\rho_4}$. Let $s_a$, $s_b$, and $s_c$ be the torus semi-invariant sections of $\O(2D)$ corresponding to the labeled lattice points. Note that $s_a$, $s_b$, and $s_c$ pull back to $x_3^2 x_4^4$, $x_1^3 x_2$, and $x_2^4 x_3^6$ in the Cox cover, respectively. Since $s_b$ is the only semi-invariant section not vanishing at the singular point, it must appear in $s$, but $s_b$ vanishes on both $D_{\rho_1}$ and $D_{\rho_2}$, so its vanishing locus is singular. We try setting $s=s_a+s_b$ to remedy this, but by the Jacobian criterion, $x_3^2x_4^4 + x_1^3x_2$ has critical points when $x_1$ and $x_4$ both vanish. So we try $s=s_a+s_b+s_c$; by the Jacobian criterion, $x_3^2x_4^4 + x_1^3x_2 + x_2^4x_3^6$ has smooth vanishing locus, so $s$ has smooth vanishing locus.

% In complete generality, given a divisor $D'=\sum c_i D_{\rho_i}$ and a semi-invariant section $s'$ of $\O(D')$ corresponding to a lattice point $x$ in the associated polytope, $s'$ vanishes on $D_{\rho_i}$ to order $x\cdot\rho_i + c_i$. In our case $D'=2D$ so $s_a$, for example, vanishes along $D_1$ to order $(-2,-2)\cdot (1,0) + 2=0$ and vanishes along $D_4$ to order $(-2,-2)\cdot (-1,-1) + 0=4$. Computing in this manner, we see $s_a$, $s_b$, and $s_c$ pull back to $x_3^2 x_4^4$, $x_1^3 x_2$, and $x_2^4 x_3^6$ in the Cox cover, respectively. It is straightforward to verify with the Jacobian criterion that the vanishing locus of $x_3^2 x_4^4 + x_1^3 x_2 + x_2^4 x_3^6$ is smooth on $V$ and misses the locus $\{x_3=x_4=0\}$, so the vanishing locus of $s=s_a+s_b+s_c$ is smooth on $X$ and misses the singular point.

\textbf{(3: compute $U$)} Lastly, we compute an explicit smooth $U$ and finite group $G$ such that $X=U/G$. Consider the projective toric variety in $\PP^{18}$ defined by the following polytope. \[\begin{tikzpicture}
  \clip (-3,-1.5) rectangle (2,2.5);
  \pgfsetzvec{\pgfpointxy{0}{1}}
  \pgfsetyvec{\pgfpointxy{-.2}{.3}}
  %% "surface" of the polytope
  \filldraw[fill=lightgray] (0,0,2) -- (-2,2,0) -- (-2,-2,0) -- (0,-2,0) -- (1,-1,0) -- cycle;
  \draw (-2,-2,0) -- (0,0,2) -- (0,-2,0);
  \draw[help lines,dashed] (-2,2,0) -- (1,-1,0);
  %% the "polygon at height 1"
  \draw[help lines] (-1,1,1) -- (-1,-1,1) -- (0,-1,1) -- (.5,-.5,1);
  \draw[help lines,dashed] (.5,-.5,1) -- (-1,1,1);
  %% axes ... maybe worth omitting
  \draw[help lines,dashed] (0,-3,0) -- (0,0,0) -- (0,0,3);
  \draw[help lines,dashed] (-3,0,0) -- (3,0,0);
  %% lattice points in the polytope
  \draw plot[only marks,mark=*] coordinates{
        (-2,-2,0) (-2,-1,0) (-2,0,0) (-2,1,0) (-2,2,0)
        (-1,-2,0) (-1,-1,0) (-1,0,0) (-1,1,0)
        (0,-2,0)  (0,-1,0) (0,0,0)
                  (1,-1,0)
        (-1,-1,1) (-1,0,1) (-1,1,1)
        (0,-1,1) (0,0,1) (0,0,2)};
 %% labelled bubbles
 \foreach \x/\y/\dottext in {-2/-2/a, 1/-1/b, -2/2/c}
   \filldraw[draw=black,fill=white] (\x,\y,0) node {$\dottext$} circle (6pt);
 \filldraw[draw=black,fill=white] (0,0,2) node {$0$} circle (6pt);
\end{tikzpicture}\]
Note that the toric variety $W$ defined by $\hhat\Sigma$ is an open subvariety, and this projective variety is precisely the closed image of the morphism $\psi\colon W\to \PP^{18}$ given by $2D'$. Let $U$ be the hyperplane slice of this variety (or equivalently, of $W$) defined by $x_0-x_a-x_b-x_c$.

There is a $\mu_2$-action on $\PP^{18}$ given as follows: if $\chi$ is the non-trivial character of $\mu_2$ and $x$ is a lattice point of the above polytope with height $h$, then $\mu_2$ acts the coordinate corresponding to $x$ through the character $\chi^h$. Then $U$ is smooth, is invariant under the $\mu_2$-action, and $X=U/\mu_2$.

\section{Characterization of torus quotients: proof of main theorem}
\label{sec:criterion}
Throughout this section, we work over a field $k$.  Our goal is to prove Theorem \ref{thm:criterion}. In \S\ref{subsec:qab}, we use Theorem \ref{thm:criterion} to prove Theorem \ref{thm:P2A5}, thereby answering Question \ref{q:abelian}.

\begin{notation}
  Throughout this section, we will use the notation $\hhat\O_{X,x}$ to denote the completion of the \emph{\'etale} local ring at a point $x$ of an algebraic space $X$.\footnote{The \'etale local ring at $x$ is simply the strict henselization of the local ring at a preimage of $x$ in some \'etale cover of $X$. See \cite[Definition \href{http://math.columbia.edu/algebraic_geometry/stacks-git/locate.php?tag=04KG}{04KG}]{stacks-project}.} Note that if $X$ is a scheme and the residue field at $x$ is algebraically closed, this agrees with the completion of the Zariski local ring.\footnote{If $X$ is a scheme and the residue fields are not algebraically closed, the proofs in this section apply verbatim if one uses the completion of the Zariski local ring. The \'etale local ring is used simply because algebraic spaces do not have Zariski local rings.}

  We use the term \emph{torus} to mean split torus. We say that a stack $\X$ is a \emph{torus quotient stack} if there is a smooth algebraic space $V$ and a torus $T$ acting properly on $V$ with finite stabilizers such that $\X=[V/T]$. We say an algebraic space $X$ is a \emph{torus quotient space} if it is the coarse space of a torus quotient stack.
\end{notation}

\begin{theorem}
\label{thm:criterion}
  Suppose an algebraic space $X$ has tame diagonalizable quotient singularities. Consider the following conditions:
  \begin{enumerate}
    \item \label{crit:fin-quot} $X$ is a quotient of a smooth algebraic space by a proper action of a finite diagonalizable group.
    \item \label{crit:tor-quot} $X$ is a torus quotient space.
    \item \label{crit:div} $X$ has Weil divisors $D_1,\dots, D_r$ whose images generate $\Cl(\hhat\O_{X,x})$ for all points $x$ of $X$.
    \item \label{crit:can} $X^\can$ is a torus quotient stack.
  \end{enumerate}
  Conditions $(\ref{crit:tor-quot})$--\,$(\ref{crit:can})$ are equivalent and implied by $(\ref{crit:fin-quot})$.  If $X$ is quasi-projective and $k$ is infinite, then $(\ref{crit:fin-quot})$ is equivalent to $(\ref{crit:tor-quot})$--\,$(\ref{crit:can})$.
\end{theorem}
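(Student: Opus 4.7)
The plan is to prove the cycle $(1)\Rightarrow(2)$, $(4)\Rightarrow(2)$, $(3)\Leftrightarrow(4)$, and $(2)\Rightarrow(3)$, invoking Theorem~\ref{thm:main2} for $(2)\Rightarrow(1)$ under the quasi-projectivity hypothesis.

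The routine implications go as follows. For $(1)\Rightarrow(2)$, given $X=U/G$ with $G$ finite diagonalizable, embed $G\hookrightarrow T$ into a split torus and set $V=(U\times T)/G$ with $G$ acting diagonally; then $V$ is a smooth algebraic space (since $G$ acts freely on the second factor), $T$ acts properly on $V$ by translation with finite stabilizers isomorphic to the original $G$-stabilizers on $U$, and $V/T\cong X$. The implication $(4)\Rightarrow(2)$ is immediate since the canonical stack has coarse space $X$. For $(3)\Leftrightarrow(4)$, combine Remark~\ref{rmk:canonical-pic=cl} giving $\pic(\Y)\cong\Cl(X)$, the standard identification $\Cl(\hhat\O_{X,x})\cong D(G_x)$ for tame abelian quotient singularities (with $G_x$ the stabilizer of the canonical stack $\Y$ at a geometric point above $x$), and Proposition~\ref{prop:quot<->bundle}. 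Under these identifications, the class of a Weil divisor $D$ in $D(G_x)$ equals the character by which $G_x$ acts on the fiber of $\O_\Y(D)$; hence divisors $D_1,\ldots,D_r$ generate every $\Cl(\hhat\O_{X,x})$ if and only if the stabilizers of $\Y$ act faithfully on $\bigoplus\O_\Y(D_i)$, which by Proposition~\ref{prop:quot<->bundle} is equivalent to $\Y$ being a torus quotient stack.

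The main step is $(2)\Rightarrow(3)$. Starting from $\W=[V/T]$ with coarse space $X$, we rigidify to assume $T$ acts on $V$ with trivial generic stabilizer, and invoke Proposition~\ref{prop:quot<->bundle} to produce line bundles $\L_1,\ldots,\L_r$ on $\W$ with faithful stabilizer action, one for each generator of $\hom(T,\GG_m)$. We extract Weil divisors $D_i$ on $X$ by replacing each $\L_i$ with a suitable power whose residual representation becomes trivial at every codim-1 stacky substack of $\W$ (as in the template of the proof of Theorem~\ref{thm:main2}), and then pushing forward the zero divisor of a generic section—the passage to a power is precisely what clears the fractional coefficients that would otherwise appear from stacky divisors in $\W$. Étale-locally at $x$, we have $\W\cong[\tilde N/\tilde G_x]$ and $\Y\cong[N/G_x]$ with $G_x=\tilde G_x/\tilde G_{x,\mathrm{ref}}$ (Remark~\ref{rmk:canonical-local-rings}), and the divisors we construct correspond under $\Cl(\hhat\O_{X,x})\cong D(G_x)$ to characters in the subgroup $D(G_x)\hookrightarrow D(\tilde G_x)$ arising from the original generating characters of $T$. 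The main technical obstacle is showing that, after this power adjustment, the surviving characters still generate $D(G_x)$ at every point $x$—this requires careful accounting of how the faithful-character condition on $D(\tilde G_x)$ interacts with the pseudoreflection quotient $\tilde G_x\twoheadrightarrow G_x$, and is the place where one must verify the compatibility between divisor pushforward on the geometric side and character manipulation on the stabilizer side.
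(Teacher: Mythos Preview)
The routine implications — $(1)\Rightarrow(2)$, $(4)\Rightarrow(2)$, and $(3)\Leftrightarrow(4)$ — are handled essentially as in the paper and are fine (the paper carries out $(3)\Leftrightarrow(4)$ with a bit more care, reducing to the toric case via Lemma~\ref{l:diagonalizable-complete-local} and Corollary~\ref{cor:pointed-affine-restriction} before invoking Lemma~\ref{lem:lb}, but your sketch captures the mechanism). Where your proposal diverges, and where it has a genuine gap, is the step $(2)\Rightarrow(3)$.

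Your route is to take the line bundles $\L_i$ on $\W=[V/T]$, raise each to a power $\L_i^{n_i}$ so that the residual representation becomes trivial along every codimension-one stacky locus of $\W$, and then descend to Weil divisors on $X$. You correctly identify the verification that the resulting characters still generate each $D(G_x)$ as the main obstacle, but the proposed power trick does not overcome it. The exponents $n_i$ are forced \emph{globally}: at a point $x$ they annihilate not only the pseudoreflection subgroup $P_x\subset\tilde G_x$, but also any codimension-one stabilizer $\mu_e\subset T$ coming from a stacky divisor that does \emph{not} pass through $x$. If such a $\mu_e$ meets $\tilde G_x$ outside $P_x$, then every $\chi_i^{n_i}$ lands in the proper subgroup $D(G_x/H)\subsetneq D(G_x)$, where $H$ is the nontrivial image of $\mu_e$ in $G_x=\tilde G_x/P_x$, and generation fails. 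Exponentiation is simply too blunt.

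The paper sidesteps this by proving $(2)\Leftrightarrow(4)$ directly (Corollary~\ref{cor:torusquot}), never passing through $(3)$. The structural input is that $[V/T]\to\X^{\can}$ is an iterated root stack along a simple normal crossing divisor (Corollary~\ref{cor:diagrootstack}, resting on Lemma~\ref{l:diagsnc} and Theorem~\ref{thm:bottom-up-smooth-DM}). Working one root $\Y=\sqrt[n]{(\L,s)/\X}$ at a time, Cadman's description of $\pic(\Y)$ writes each line bundle as $\M^{i_j}\otimes\pi^*\K_j$ with $\K_j\in\pic(\X)$; one then checks that $(\K_1,\dots,\K_r,\L)$ gives a representable map $\X\to B\GG_m^{r+1}$. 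This decomposition is precisely the sharpening your argument needs: it isolates the pullback component of each $\L_j$ rather than killing the universal-root component by taking powers, and that is why local faithfulness survives the descent.
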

\begin{remark}\label{rmk:criterion-zariski-local}
  Condition (\ref{crit:div}) is a Zariski local, as the image of $\Cl(X)$ in $\Cl(\hhat\O_{X,x})$ is determined by any open neighborhood of $x$, so the other conditions are also Zariski local.
\end{remark}

\begin{proof}[\ifnotlms Proof \fi of Theorem \ref{thm:main-intro} from Theorem \ref{thm:criterion}]
Note that if $X$ is a quasi-projective variety and $k$ is algebraically closed, then tame finite diagonalizable groups are the same as tame finite abelian groups. It therefore suffices to show that condition ($i$) of Theorem \ref{thm:main-intro} is equivalent to condition ($i$) of Theorem \ref{thm:criterion} for $1\leq i\leq 4$. Condition (\ref{crit:div}) is the same in the two theorems.

  We first show that the conditions of Theorem \ref{thm:main-intro} imply those of Theorem \ref{thm:criterion}. An action of a finite group on a separated scheme is always proper by Lemma \ref{lem:quotient=coarse-space}(\ref{item:finite->proper}), so condition (\ref{introitem:finite-quot}) of Theorem \ref{thm:main-intro} implies condition (\ref{crit:fin-quot}) of Theorem \ref{thm:criterion}. If condition (\ref{introitem:torus-quot}) of Theorem \ref{thm:main-intro} holds, then $X$ is the geometric quotient of a quasi-projective variety $V$ by a torus $T$.
  By Lemma \ref{lem:quotient=coarse-space}(\ref{item:q-proj->quotients-agree}), the action of $T$ on $V$ is proper and $X$ is the coarse space of $[V/T]$, so condition (\ref{crit:tor-quot}) of Theorem \ref{thm:criterion} holds. Lastly, since canonical stack morphisms are proper, the canonical stack over a separated scheme is separated. If the canonical stack over $X$ is a quotient of a quasi-projective $V$ by a torus $T$, then the action must therefore be proper, so condition (\ref{introitem:canonical}) of Theorem \ref{thm:main-intro} implies condition (\ref{crit:can}) of Theorem \ref{thm:criterion}. 
  
  We next show that the conditions of Theorem \ref{thm:criterion} imply those of Theorem \ref{thm:main-intro}. If $V$ is a smooth algebraic space with a proper action of an affine group $H$ with tame finite stabilizers so that $X=V/H$, then by Lemma \ref{lem:quotient=coarse-space}(\ref{item:algsp->q-proj-var}), $V$ is a quasi-projective variety. Therefore, for $i=\ref{crit:fin-quot},\ref{crit:can}$, condition ($i$) of Theorem \ref{thm:criterion} implies condition ($i$) of Theorem \ref{thm:main-intro}. By Lemma \ref{lem:quotient=coarse-space} (\ref{item:algsp->q-proj-var} then \ref{item:q-proj->quotients-agree}), condition (\ref{crit:tor-quot}) of Theorem \ref{thm:criterion} implies condition (\ref{introitem:torus-quot}) of Theorem \ref{thm:main-intro}.
\end{proof}

We now assemble the results needed to prove Theorem \ref{thm:criterion}.
\begin{lemma}
\label{l:diagonalizable-complete-local}
  Let $G$ be a finite diagonalizable group acting on a regular complete noetherian local $k$-algebra $A$ with maximal ideal $\m$ and residue field $K$. There is a $G$-equivariant isomorphism $\bigl(\sym^*_K(\m/\m^2)\bigr)^\wedge\to A$, where $\m/\m^2$ is given the induced linear $G$-action and $(-)^\wedge$ denotes completion. Moreover, $A^G$ is isomorphic to the complete local ring at the distinguished point of a pointed affine toric variety over $K$.
\end{lemma}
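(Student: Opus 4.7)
The plan is to use the linear reductivity of diagonalizable group schemes to produce a $G$-equivariant Cohen structure on $A$, from which both assertions follow almost formally. Since $G$ is a finite diagonalizable group scheme, it is linearly reductive: its representation category is equivalent to the category of vector spaces graded by the character group $\hhat G$, hence semisimple. I would use this in two places. On the one hand, the Cohen structure theorem gives a coefficient field $K\hookrightarrow A$, and since $G$ acts trivially on $K$ (as is automatic, e.g.\ in the \'etale local setting, where $K$ is separably closed), averaging over $G$ produces a $G$-stable coefficient field. On the other hand, the surjective $G$-equivariant map $\m\to\m/\m^2$ splits on weight components: decomposing $\m/\m^2=\bigoplus_\chi V_\chi$ into character eigenspaces and choosing a basis $\bar x_1,\ldots,\bar x_n$ whose elements each lie in a single weight space $V_{\chi_i}$, linear reductivity lets me lift each $\bar x_i$ to a semi-invariant element $\tilde x_i\in\m$ of weight $\chi_i$.

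Next I would define a $K$-algebra map $K[t_1,\ldots,t_n]\to A$ by $t_i\mapsto \tilde x_i$ and declare $G$ to act on the source with $t_i$ of weight $\chi_i$; by construction this map is $G$-equivariant. Since $\tilde x_i\in\m$, it extends continuously to a map of complete local $K$-algebras
\[
  \bigl(\sym^*_K(\m/\m^2)\bigr)^\wedge \;=\; K[[t_1,\ldots,t_n]] \;\longrightarrow\; A.
\]
This extension is $G$-equivariant and induces the identity on $\m/\m^2$, so it is surjective by the complete Nakayama lemma, and since source and target are regular complete local noetherian $K$-algebras of the same dimension $n$, surjectivity forces it to be an isomorphism.

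For the second assertion, transport the $G$-action across this isomorphism so that $G$ acts diagonally on $K[[t_1,\ldots,t_n]]$ with weight $\chi_i$ on $t_i$. Then $A^G$ is the $\m$-adic completion of $K[t_1,\ldots,t_n]^G$, and the latter is the semigroup algebra $K[\NN^n\cap M]$, where $M=\ker(\ZZ^n\to\hhat G)$ is the sublattice of exponents whose associated character of $G$ is trivial. Since $\NN^n\cap M$ is a finitely generated saturated submonoid generating the strictly convex rational polyhedral cone $\RR_{\ge 0}^n\cap(M\otimes_\ZZ\RR)$, $\spec K[\NN^n\cap M]$ is a pointed affine toric variety over $K$, and the $\m$-adic completion at its unique torus-fixed point recovers $A^G$.

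The main obstacle is the equivariant linearization step, namely producing a $G$-stable coefficient field and lifting a weight basis of $\m/\m^2$ to semi-invariants in $\m$. Both rely crucially on the linear reductivity of diagonalizable groups; once carried out, the rest of the lemma reduces to Nakayama plus a dimension count and a standard computation with affine semigroup algebras.
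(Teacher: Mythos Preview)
Your proposal is correct and follows essentially the same route as the paper: both use linear reductivity of the diagonalizable group to split $\m\to\m/\m^2$ $G$-equivariantly, invoke the Cohen structure theorem to conclude that the induced map $\bigl(\sym^*_K(\m/\m^2)\bigr)^\wedge\to A$ is an isomorphism, and then compute $A^G$ by diagonalizing the $G$-action on $\m/\m^2$ and identifying the invariant subring as the monoid algebra on the invariant monomials. Your treatment is slightly more explicit in two places---you spell out the choice of a $G$-stable coefficient field and you argue the isomorphism via Nakayama plus a dimension count rather than a bare citation of Cohen---but these are elaborations of the same argument rather than a different strategy.
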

\begin{proof}
  Since $G$ is linearly reductive (even if it is not tame), the surjection $\m\to \m/\m^2$ admits a $G$-equivariant splitting $\m/\m^2\to \m$. The induced ring homomorphism $\bigl(\sym^*_K(\m/\m^2)\bigr)^\wedge\to A$ is $G$-equivariant by construction, and induces an isomorphism of tangent spaces, so by the Cohen Structure Theorem \cite[Theorem 7.7]{eisenbud}, it is an isomorphism.

  Since $G$ is linearly reductive, taking $G$-invariants commutes with limits, so the completion of $\sym^*_K(\m/\m^2)^G$ is isomorphic to $\bigl((\sym^*_K(\m/\m^2))^\wedge\bigr)^G \cong A^G$. Thus, for the final assertion, it suffices to show that $\sym^*_K(\m/\m^2)^G$ is the coordinate ring of a pointed affine toric variety. Since $G$ is diagonalizable, there exists a basis $x_1,\dots, x_n$ for $\m/\m^2$ so that each $x_i$ is semi-invariant. Let $M$ be the monoid of monomials in the $x_i$ which are invariant under $G$. It is clear that $\sym^*_K(\m/\m^2)^G = K[x_1,\dots, x_n]^G=K[M]$, and that $\spec K[M]$ is a pointed toric variety.
\end{proof}

\begin{lemma}
\label{l:diagsnc}
  Let $U$ be a smooth algebraic space and $G$ a diagonalizable group scheme which acts properly on $U$ with tame finite stabilizers. Let $\X$ be the canonical stack over $U/G$ and $f\colon [U/G]\to\X$ the induced map (see Remark \ref{rmk:canonical-universal-property}). Then the ramification divisor $\D\subseteq \X$ of $f$ is a simple normal crossing divisor with smooth components.
\end{lemma}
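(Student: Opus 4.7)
The plan is to verify the SNC condition étale locally on $\X$ by passing to complete local rings at each geometric point $x$, and then to use the diagonalizability of the stabilizers to pin the branch divisor down as a union of coordinate hyperplanes. Concretely, fix a geometric point $u$ of $U$ lying over $x$, with (diagonalizable, finite) stabilizer $G_u\subseteq G$. By Lemma \ref{l:diagonalizable-complete-local}, $\hhat\O_{U,u}$ is $G_u$-equivariantly isomorphic to $k[[V]]$ for $V=\m_u/\m_u^2$ with the induced linear $G_u$-action, and $G_u$ necessarily acts faithfully on $V$ (otherwise the kernel would act trivially on an étale neighborhood of $u$, violating the finite-stabilizer hypothesis). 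Let $P\subseteq G_u$ denote the subgroup generated by pseudoreflections; Remark \ref{rmk:canonical-local-rings} together with Chevalley--Shephard--Todd identifies the complete-local picture of $f$ with the canonical map $[\spec k[[V]]/G_u]\to[\spec k[[V]]^P/(G_u/P)]=[\spec k[[W]]/(G_u/P)]$.

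Next I would diagonalize: choose a basis $x_1,\dots,x_n$ of $V$ on which $G_u$ acts through characters $\chi_1,\dots,\chi_n$. A pseudoreflection is then an element with $\chi_j=1$ for all but one index $i$, so if $P_i\subseteq G_u$ denotes the (cyclic) intersection of the kernels of $\{\chi_j\}_{j\ne i}$ and $n_i=|P_i|$, then $P=P_1\times\cdots\times P_n$ and $k[[V]]^P=k[[x_1^{n_1},\dots,x_n^{n_n}]]$. The ramification locus of the $P$-quotient $\spec k[[V]]\to\spec k[[W]]$ is $\bigcup_{n_i>1}\{x_i=0\}$, whose image is the branch divisor $\bigcup_{n_i>1}\{y_i=0\}$ on $\spec k[[W]]$, where $y_i:=x_i^{n_i}$ serve as regular parameters on $W$. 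This is visibly an SNC divisor.

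Finally, the residual action of $G_u/P$ on $\spec k[[W]]$ is still diagonal in the coordinates $y_i$, so each component $\{y_i=0\}$ is individually $(G_u/P)$-invariant; the branch divisor therefore descends to an SNC divisor on $[\spec k[[W]]/(G_u/P)]$, which is a complete-local model of $\X$ at $x$. The main obstacle is more bookkeeping than substance: one must check that these local branch divisors glue to a globally well-defined $\D\subseteq\X$ (which follows from functoriality of the branch locus for tame ramified covers of smooth DM stacks), and that SNC on all complete local stalks implies SNC in the étale topology (which is immediate, since the regular parameters $y_i$ exhibited above cut out the components transversally and the property is determined by the completed local rings).
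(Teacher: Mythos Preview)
Your local computation is essentially the paper's: reduce to the stabilizer $G_u$ acting linearly on $k[[V]]$, diagonalize, identify the pseudoreflection subgroup $P=\prod P_i$, and read off the branch divisor as the union of coordinate hyperplanes $\{y_i=0\}$ in $\spec k[[W]]$. That part is fine.

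The gap is in your last sentence. You assert that ``SNC on all complete local stalks implies SNC in the \'etale topology,'' but this is false as stated: \emph{normal crossings} is determined by complete local rings, while \emph{simple} normal crossings additionally requires each global irreducible component of $\D$ to be smooth, and that is not a formal-local condition. A nodal curve in a smooth surface looks like $\{y_1y_2=0\}$ at its node, yet it is not SNC because its single irreducible component is singular. Your argument shows only that $\D$ has normal crossings; you have not ruled out the possibility that two of the formal branches $\{y_i=0\}$ at a point $x$ are branches of one global component of $\D$ that self-intersects at $x$.

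The paper handles this explicitly, and the key observation is one you have already made but not exploited: the formal branches $\{y_i=0\}$ are distinguished by their stabilizers $P_i$, and these are subgroups of the fixed ambient group $G$, not merely of the local stabilizer $G_u$. Since the stabilizer subgroup attached to a component of the ramification divisor on $[U/G]$ is a global invariant, two formal branches with distinct $P_i$ cannot lie on the same global component. The paper turns this into the statement that, at every point, the number of global components of $\D$ passing through $x$ equals the number of formal components, which is exactly what upgrades normal crossings to simple normal crossings. You should insert this argument in place of the claim that SNC is local.
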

\begin{proof}
  Let $X=U/G$ be the coarse space of $[U/G]$. To show that $\D$ has simple normal crossings, it suffices to check that for every point $x$ of $X$, the pullback of $\D$ to $\X\times_X\spec\hhat\O_{X,x}$ is a simple normal crossing divisor. We may therefore replace $X$ by $\spec\hhat\O_{X,x}$, $U$ by $\spec\hhat\O_{U,u}$, and $G$ by the stabilizer of $u$, where $u$ maps to $x$. We may quotient by the kernel of the action of $G$ without changing the ramification divisor $\D$, so we may assume $G$ acts faithfully on $U$.

  Let $\m$ be the maximal ideal of $\O_{U,u}$ and let $K$ be the residue field of $u$. By Lemma \ref{l:diagonalizable-complete-local}, $\O_{U,u}$ is $G$-equivariantly isomorphic to $\bigl(\sym^*_K(\m/\m^2)\bigr)^\wedge$. Since $G$ is diagonalizable, there is a basis $x_1,\dots, x_n$ for $\m/\m^2$ so that each $x_i$ is semi-invariant. Let $P_i\subseteq G$ be the subgroup which acts trivially on $x_j$ for all $j\neq i$. The $P_i$ are precisely the pseudoreflections of the action of $G$. The subgroup $P\subseteq G$ generated by pseudoreflections is the direct sum of the $P_i$. By Remark \ref{rmk:canonical-local-rings}, we have
  \[
    \X=[\spec\O_{U,u}^P/(G/P)].
  \]
  Since $P$ and $G/P$ are \'etale, and $\D$ is the ramification divisor of
  \[
    [U/G] = [\spec\O_{U,u}/G]=\bigl[[\spec\O_{U,u}/P]\bigm/(G/P)\bigr]\to \bigl[\spec\O_{U,u}^P\bigm/(G/P)\bigr]=\X,
  \]
  it suffices to check that the ramification divisor $D\subseteq \spec\O_{U,u}^P$ of $\pi:\spec\O_{U,u}\to\spec\O_{U,u}^P$ has simple normal crossings. If $P_i$ has order $\ell_i$, then we see that $\pi$ is given by the inclusion of rings $K[[y_1,\dots,y_n]]\subseteq K[[x_1,\dots,x_n]]$ with $y_i=x_i^{\ell_i}$.  Hence, $D$ is the union of the $y_i$-coordinate hyperplanes with $\ell_i\neq 1$, so is a normal crossing divisor.

  Before proving smoothness of the components of $\D$, we observe that the ramification divisor on the source of $\pi$ is $\pi^{-1}(D)\subseteq\spec\O_{U,u}$, given by the $x_i$-coordinate hyperplanes with $\ell_i\neq 1$. Note that the stabilizers of the components are precisely the $P_i$, and that these subgroups of $G$ are distinct for distinct components of the ramification divisor.

  We now show that the components of $\D$ are smooth. For this, it no longer suffices to replace $X$ by $\spec\hhat\O_{X,x}$, so we return to the original notation. To complete the proof, we show that for every point $z$ of $\X$, the number of components of $\D$ passing through $z$ is equal to the number of components of the induced divisor in $\X\times_X\spec\hhat\O_{X,x}$, where $x$ is the image of $z$ in $X$. This shows that no divisor self-intersects.

  \[\xymatrix{
    [U/G]\times_X \spec\hhat\O_{X,x} \ar[r]\ar[d] & [U/G]\ar[d]^f\\
    \X\times_X \spec\hhat\O_{X,x} \ar[r] & \X
  }\qquad\xymatrix{
    \{\text{comp.~of induced divisor}\}\ar[r]^-\sim\ar[d]^\wr & \{\text{comp.~of }f^{-1}(\D)\text{ at }u\}\ar[d]^\wr\\
    \{\text{comp.~of induced divisor}\}\ar[r] & \{\text{comp.~of }\D\text{ at }z\}
  }\]

  Let $u$ be a point of $[U/G]$ such that $f(u)=z$. We have that $f$ is a homeomorphism, as $[U/G]\to X$ and $\X\to X$ are coarse space maps, hence homeomorphisms. Therefore, the components of $\D$ passing through $z$ are in bijection with the components of $f^{-1}(\D)$ passing through $u$. These are in bijection with the components of the induced divisor in $[U/G]\times_X\spec\hhat\O_{X,x}$, as the distinct formal components are stabilized by distinct subgroups of $G$. Finally, these are in bijection with the components of the induced divisor in $\X\times_X\spec\hhat\O_{X,x}$, as the morphism $[U/G]\times_X\spec\hhat\O_{X,x}\to \X\times_X\spec\hhat\O_{X,x}$ is a homeomorphism.
\end{proof}

\begin{corollary}\label{cor:diagrootstack}
  Suppose $U$ is a smooth algebraic space and $G$ is a diagonalizable group that acts properly on $U$ with tame finite stabilizers.  Then the induced map $f:[U/G]\to (U/G)^\can$ is a root stack morphism along a collection of smooth connected divisors with simple normal crossings.
\end{corollary}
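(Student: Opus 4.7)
The plan is to identify $f$ explicitly with an iterated root stack construction built from the components of the ramification divisor $\D\subseteq\X$, which by Lemma \ref{l:diagsnc} is already known to be simple normal crossing. Write $\D=\sum_{i}\D_i$ for the decomposition into its (smooth) connected components. The formal-local description in the proof of Lemma \ref{l:diagsnc} attaches to each $\D_i$ a ramification multiplicity $\ell_i$, namely the order of the pseudoreflection subgroup $P_i\subseteq G$ stabilizing the component; the observation, made in that proof, that distinct formal components of the ramification divisor are stabilized by distinct pseudoreflection subgroups ensures that $\ell_i$ is constant on each globally connected component $\D_i$, so the decoration is globally well-defined.

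Next, form the iterated root stack
\[
  \Y := \sqrt[\ell_1]{\D_1/\X}\times_\X\cdots\times_\X\sqrt[\ell_r]{\D_r/\X},
\]
which is smooth by \cite[1.3.b.(3)]{fmn} since the $\D_i$ cross normally. The same formal-local computation shows that $f^*\D_i$ is $\ell_i$ times a Cartier divisor on $[U/G]$, so by the universal property of root stacks, $f$ factors as $[U/G]\to \Y\to\X$ via some morphism $g\colon [U/G]\to \Y$ over $\X$.

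I would then show $g$ is an isomorphism by checking on formal completions at every geometric point of $X$. There, Lemma \ref{l:diagsnc} identifies $\X$ with $[\spec K[[y_1,\ldots,y_n]]/(G/P)]$ and $[U/G]$ with $[\spec K[[x_1,\ldots,x_n]]/G]$ via $y_i=x_i^{\ell_i}$, while Lemma \ref{lem:pedroot} identifies the corresponding base change of $\Y$ as the result of adjoining an $\ell_i$-th root of $y_i$ for each $i$, yielding the same quotient stack $[\spec K[[x_1,\ldots,x_n]]/G]$. Hence $g$ is a formal isomorphism everywhere, and therefore an isomorphism.

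The main obstacle I anticipate is the globalization step: ensuring that the multiplicities $\ell_i$ are well-defined and that the local root-stack descriptions patch, both of which rest on the distinct-pseudoreflection observation cited above. A secondary point is that, if $G$ is not assumed to act faithfully on $U$, the map $[U/G]\to [U/(G/K)]$ (where $K$ is the kernel of the action on $U$) is a banded gerbe and does not alter $\D$, so one can reduce to the faithful case at the outset and absorb the gerbe into the root-stack presentation of $f$.
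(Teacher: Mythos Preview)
Your approach is sound but takes a different route from the paper. The paper's proof is a one-line application of Theorem~\ref{thm:bottom-up-smooth-DM} from the appendix: applied with $[U/G]$ in the role of the smooth tame Deligne--Mumford stack, that theorem yields $[U/G]\cong\bigl(\sqrt{\D/X^{\can}}\bigr)^{\can}$; since $\D$ is simple normal crossing by Lemma~\ref{l:diagsnc}, the root stack $\sqrt{\D/\X}$ is already smooth, so the outer canonical stack is superfluous and $[U/G]\cong\sqrt{\D/\X}$. The work is thus deferred to the appendix, where purity of the branch locus and Zariski's Main Theorem (via Corollary~\ref{cor:etbirat==>rep} and Lemma~\ref{lem:unram-iso-on-coarse-implies-iso}) handle the comparison for arbitrary smooth tame DM stacks.

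Your argument instead constructs the candidate root stack $\Y$ directly and verifies $g\colon[U/G]\to\Y$ is an isomorphism by formal-local computation. This is more hands-on and avoids the appendix machinery, but you leave a small gap: you should justify why ``$g$ is a formal isomorphism at every geometric point of $X$'' implies that $g$ is a global isomorphism of Deligne--Mumford stacks. One clean way: the formal computation shows $g$ is representable (stabilizers match) and \'etale, and both source and target have coarse space $X$, so $g$ induces an isomorphism on coarse spaces; then Lemma~\ref{lem:unram-iso-on-coarse-implies-iso} applies. Alternatively, representable, \'etale, separated, and bijective on geometric points suffices.

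One genuine error: your closing remark on the non-faithful case is wrong. If $G$ acts with nontrivial kernel $K$, then $[U/G]$ has generic stabilizer $K$, while any root stack of $\X$ along divisors has trivial generic stabilizer; hence $f$ cannot be a root stack morphism, and there is no way to ``absorb the gerbe into the root-stack presentation.'' The corollary implicitly requires $[U/G]$ to have trivial generic stabilizer (this is also a hypothesis of Theorem~\ref{thm:bottom-up-smooth-DM}). The correct reduction is simply to replace $G$ by $G/K$ at the outset and note that $[U/(G/K)]\to\X$ is the map to which the statement applies.
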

\begin{proof}
  By \cite[Theorem 1]{bottomupDM}, any smooth separated tame Deligne-Mumford stack $\X$ is isomorphic to $\sqrt{\D/X^\can}^\can$, where $X$ is the coarse space of $\X$ and $\D$ is the ramification divisor of the induced map $\X\to X^\can$. The result follows from Lemma \ref{l:diagsnc}, as a root stack of a simple normal crossing divisor is smooth \cite[1.3.b(3)]{fmn}, so isomorphic to its canonical stack.
\end{proof}

\begin{corollary}
\label{cor:torusquot}
  Let $X$ be an algebraic space with quotient singularities. Then $X$ is a torus quotient space if and only if $X^\can$ is a torus quotient stack.
\end{corollary}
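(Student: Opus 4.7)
The reverse implication is immediate from definitions: if $\X = [V/T]$ is a torus quotient stack, then the coarse space of $\X$ (which is $X$) is, by definition, a torus quotient space.

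For the forward implication, suppose $X$ is the coarse space of a torus quotient stack $\Y = [W/T]$. By Corollary \ref{cor:diagrootstack}, the induced map $\pi\colon \Y \to \X$ is a root stack morphism along smooth connected divisors $\D_1,\dots,\D_m$ of $\X$ (with simple normal crossings) and indices $n_1,\dots,n_m$. My strategy is, via Proposition \ref{prop:quot<->bundle}, to produce a collection of line bundles on $\X$ whose direct sum has faithful stabilizer action; this will exhibit $\X$ as a torus quotient stack $[V/\GG_m^s]$, with $V$ a smooth algebraic space (since $V\to\X$ is a torus torsor over a smooth Deligne--Mumford stack), finite stabilizers (as $\X$ is Deligne--Mumford), and proper action (as $\X$ is separated, inherited from $X$).

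Applied to $\Y$, Proposition \ref{prop:quot<->bundle} gives line bundles $\L_1',\dots,\L_r'$ on $\Y$ whose direct sum has faithful stabilizer action. Using the standard Picard group presentation for root stacks along simple normal crossing divisors (see e.g.~\cite[1.3.b]{fmn}), I write $\L_j' \cong \pi^*\N_j \otimes \bigotimes_i \M_i^{\otimes a_{ji}}$, where $\N_j\in\pic(\X)$, $\M_i$ is the tautological root-stack line bundle with $\M_i^{\otimes n_i}\cong\pi^*\O_\X(\D_i)$, and $a_{ji}\in\ZZ$. I then claim the collection $\{\N_j\}\cup\{\O_\X(\D_i)\}$ of $r+m$ line bundles on $\X$ has faithful stabilizer action, from which the theorem follows by Proposition \ref{prop:quot<->bundle}.

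The verification of the claim is the main content of the argument and is character-theoretic. Fix a geometric point $x$ of $\X$ and a preimage $y$ in $\Y$. The local structure of root stacks along simple normal crossing divisors identifies the kernel $K_y$ of $H_y := \stab_\Y(y) \twoheadrightarrow G_x := \stab_\X(x)$ with $\prod_{i : x \in \D_i}\mu_{n_i}$, and identifies $\hat G_x$ with the subgroup of $\hat H_y$ consisting of characters trivial on $K_y$. Writing $c_j,\chi_j,b_i \in \hat H_y$ (additively) for the characters of $\L_j',\N_j,\M_i$, the decomposition gives $c_j = \chi_j + \sum_i a_{ji}b_i$. Each $\chi_j$ lies in $\hat G_x$ (as $\N_j$ is pulled back), and each $n_i b_i$ lies in $\hat G_x$ and is exactly the character of $\O_\X(\D_i)$ (since $\M_i^{\otimes n_i}\cong\pi^*\O_\X(\D_i)$). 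Given any $\eta\in\hat G_x$, faithfulness on $\Y$ lets us write $\eta = \sum_j q_j c_j = \sum_j q_j \chi_j + \sum_i\bigl(\sum_j q_j a_{ji}\bigr) b_i$, and the condition $\eta|_{K_y}=0$ forces $n_i\mid\sum_j q_j a_{ji}$ for each $i$, so $\eta$ is an integer combination of the $\chi_j$ and the $n_i b_i$. The key subtlety to anticipate is that taking only the $\N_j$'s would not suffice (the $\chi_j$ alone need not generate $\hat G_x$); augmenting by the divisor line bundles $\O_\X(\D_i)$ is what makes the argument go through.
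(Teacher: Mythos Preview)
Your proof is correct and follows essentially the same strategy as the paper: reduce to root stacks via Corollary~\ref{cor:diagrootstack}, decompose line bundles on $\Y$ as pullbacks from $\X$ tensored with powers of the tautological root bundles, and show that the pullback factors $\N_j$ together with the divisor bundles $\O_\X(\D_i)$ give faithful stabilizer actions on $\X$. The paper phrases the argument in terms of representability of morphisms to $B\GG_m^s$ (via Lemma~\ref{lem:representability-criteria} and Corollary~\ref{cor:representability}) and inducts one root at a time using \cite[Corollary~3.1.2]{root}, whereas you work directly with characters at geometric points and handle all roots simultaneously; these are equivalent formulations of the same idea.
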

\begin{proof}
  By Corollary \ref{cor:diagrootstack}, it suffices to show that a stack is a torus quotient stack if and only if its roots along smooth connected divisors are torus quotient stacks. Let $\X$ be a stack, and let $\Y=\sqrt[n]{(\L,s)/\X}$, for some $\L$, $s$, and $n$, where the vanishing locus of $s$ is smooth and connected. Consider the following cartesian diagram, in which $f$ corresponds to the line bundle $\L$ with section $s$. Let $\M$ be the universal line bundle on $\Y$ so that $\M^{\otimes n}\cong \L$ (i.e.~the line bundle corresponding to $u\circ g$ in the diagram below), and let $t$ be the universal section of $\M$ so that $t^{\otimes n}=s$.
  \[\xymatrix{
   \Y\ar[r]^-g\ar[d]_\pi & [\AA^1/\GG_m]\ar[r]^-u\ar[d]^{\hat{}n} & B\GG_m\\
   \X\ar[r]^-f & [\AA^1/\GG_m]\ar[r]^-u & B\GG_m
  }\]

  $(\Rightarrow)$ Suppose $\X$ is a torus quotient stack. By Lemma \ref{lem:quot<->representable} there exists a representable morphism $\kappa:\X\to B\GG_m^r$. By Corollary \ref{cor:representability}(\ref{rep:product}), $(\kappa, f)\colon \X\to B\GG_m^r\times [\AA^1/\GG_m]$ is also representable. Since $(\kappa\circ\pi, g)\colon \Y\to B\GG_m^r\times [\AA^1/\GG_m]$ is the pullback of $(\kappa, f)$, it is representable by Corollary \ref{cor:representability}(\ref{rep:local}). Then $(\kappa, u\circ g)\colon \Y\to B\GG_m^{r+1}$ is representable by Corollary \ref{cor:representability}(\ref{rep:property-P}), so $\Y$ is a torus quotient stack by Lemma \ref{lem:quot<->representable}.

  $(\Leftarrow)$ Now suppose $\Y$ is a torus quotient stack. Then there is a representable morphism $\ell\colon \Y\to B\GG_m^r$, corresponding to a tuple of line bundles $(\L_1,\dots, \L_r)$. Then $(\ell, g)$, corresponding to $(\L_1,\dots, \L_r,(\M,t))$, is representable by Corollary \ref{cor:representability}(\ref{rep:product}). By \cite[Corollary 3.1.2]{root}, since $s$ has connected vanishing locus, we have
  \[
  \L_j=\M^{i_j}\otimes \pi^*\K_j
  \]
  for some $0\le i_j<n$ and line bundle $\K_j$ on $\X$. Composing with the automorphism of $B\GG_m^r\times [\AA^1/\GG_m]$ sending $(\L_1,\dots, \L_r,(\M,t))$ to $(\L_1\otimes\M^{-i_1},\dots \L_r\otimes \M^{-i_r},(\M,t))$, we see that there is a representable morphism of the form $(\kappa\circ \pi, g)\colon \Y\to B\GG_m^r\times [\AA^1/\GG_m]$. This morphism is the pullback of $(\kappa, f)\colon \X\to B\GG_m^r\times [\AA^1/\GG_m]$, which must also be representable by Corollary \ref{cor:representability}(\ref{rep:local}). Then by Corollary \ref{cor:representability}(\ref{rep:property-P}), $(\kappa, u\circ f)\colon \X\to B\GG_m^{r+1}$ is representable, so $\X$ is a torus quotient stack.
\end{proof}

\begin{proposition}\label{prop:inv-class-gp}
  Let $A$ be a regular ring and let $G$ be a tame finite diagonalizable group which acts faithfully on $X=\spec A$ and freely on an open subscheme $U\subseteq X$  whose complement has codimension at least 2. Suppose that one of the following holds:
  \begin{enumerate}
    \item $A$ is local with maximal ideal $\m$, or
    \item $A=k[x_1,\dots,x_n]$ and $G$ fixes the origin in $\AA^n=\spec A$.
    %$A^\times=k^\times$, $\Cl(X)$ is trivial, and $G$ fixes a closed point $x\in X$.
  \end{enumerate}
  Then $\Cl(A^G)$ is canonically isomorphic to the Cartier dual $D(G)$ of $G$.
\end{proposition}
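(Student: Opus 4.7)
The strategy is to identify $\Cl(A^G)\cong\pic^G(U)$, show that every $G$-equivariant line bundle on $U$ has trivial underlying bundle, and then classify the equivariant structures using the $M$-grading $A=\bigoplus_{m\in M}A_m$ corresponding to $G=D(M)$.

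First I would establish $\Cl(A^G)\cong\pic^G(U)$. Since $G$ is tame diagonalizable, it is \'etale and linearly reductive, so $A^G$ is Noetherian and normal, and the free action of $G$ on the smooth scheme $U$ makes $U\to U/G$ an \'etale $G$-torsor with $U/G$ smooth. The finite morphism $\spec A\to\spec A^G$ preserves codimensions, so the complement of $U/G$ in $\spec A^G$ has codimension at least $2$. Normality of $A^G$ together with Hartogs then gives $\Cl(A^G)=\Cl(U/G)=\pic(U/G)$, and descent along the torsor identifies $\pic(U/G)$ with $\pic^G(U)$.

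Next I would observe that $\pic(U)=0$: in both cases $A$ is a UFD (by Auslander--Buchsbaum in case (1); by inspection in case (2)), so $\Cl(\spec A)=0$, and since the complement of $U$ in $\spec A$ has codimension at least $2$ also $\Cl(U)=0$; smoothness of $U$ gives $\pic(U)=\Cl(U)=0$. Hence every $G$-equivariant line bundle on $U$ has trivial underlying bundle. Under the equivalence between $G$-equivariant quasi-coherent sheaves on $\spec A$ and $M$-graded $A$-modules (and similarly on $U$, using Hartogs to extend across the codimension-two complement), an equivariant line bundle with trivial underlying bundle corresponds to an $M$-grading on the rank-one $A$-module $A$ compatible with the ring grading; this grading is determined by the degree $m_0\in M$ of the generator $1$, yielding the shifted module $A(m_0)$. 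A degree-preserving $A$-module isomorphism $A(m_0)\to A(m_0')$ is multiplication by a homogeneous unit of degree $m_0-m_0'$, so $\pic^G(U)\cong M/H$, where $H\subseteq M$ is the subgroup of degrees of homogeneous units of $A$.

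The final step, which I expect to be the main obstacle, is to show $H=0$, thereby producing the canonical isomorphism $\pic^G(U)\cong M=D(G)$ (sending an equivariant line bundle to the character of the $G$-action on its fiber at the distinguished point). The key input is that $G$ acts trivially on the residue field: in case (2) this follows from $G$ fixing the origin, hence acting $k$-linearly, while in case (1) it is built into the setup, consistent with Lemma \ref{l:diagonalizable-complete-local}. Granting this, the $M$-grading descends to a trivial grading on $A/\m$ (respectively on $k$), which forces $A_m\subseteq\m$ (respectively $A_m\subseteq(x_1,\dots,x_n)$) for every $m\neq 0$. Since units of the local ring in case (1) must avoid $\m$, and units of $k[x_1,\dots,x_n]$ in case (2) lie in $k^\times\subseteq A_0$, every homogeneous unit of $A$ has degree zero, so $H=0$.
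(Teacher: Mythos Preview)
Your overall strategy matches the paper's: identify $\Cl(A^G)$ with the $G$-equivariant Picard group, use that the ordinary Picard group vanishes, and then classify $G$-equivariant structures on the trivial line bundle. The paper routes this through $\pic^G(X)\cong\pic([X/G])$ via the canonical stack rather than through $\pic^G(U)$ and descent, but after your Hartogs extension these are the same thing.

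There is, however, a genuine gap at the heart of your classification step. You write that an $M$-grading on the rank-one $A$-module $A$ ``is determined by the degree $m_0\in M$ of the generator $1$, yielding the shifted module $A(m_0)$.'' This presupposes that $1$ is homogeneous in the new module grading, which is not automatic: an arbitrary compatible $M$-grading on a free rank-one module need not admit a homogeneous generator (e.g.\ over $A=k\times k$ with trivial grading one can grade the module $A$ with $N_0=k\times 0$, $N_1=0\times k$). Establishing that a homogeneous generator exists in cases (1) and (2) is exactly the content of the paper's surjectivity argument. In case (1) the paper writes a generator $m=\sum_\chi m_\chi$ and uses Nakayama to see that some $m_\chi$ is a unit multiple of $m$; in case (2) it argues directly that any generator is already semi-invariant, using that $(R[x_1,\dots,x_n])^\times=R^\times$ for every $k$-algebra $R$.

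Your $H=0$ step is correct and amounts to the paper's injectivity argument (distinct shifts are non-isomorphic), and the facts you assemble there---that $A_m\subseteq\m$ for $m\neq 0$ and that units of $A$ lie in $A_0$---are close to what is needed for the missing step. But as written, your isomorphism $\pic^G(U)\cong M/H$ is not yet justified: you have only shown that the shifts $A(m_0)$ are pairwise non-isomorphic, not that every equivariant structure is a shift.
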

\begin{proof}
  By assumption, the complement of $U$ in $X$ is of codimension at least 2, so the canonical stack over $\spec A^G$ is $[X/G]$ by Remark \ref{rmk:canonical-universal-property}. By Remark \ref{rmk:canonical-pic=cl}, we have $\Cl(A^G)\cong \pic([X/G])\cong \pic^G(X)$. We will show that $\pic^G(X)$ is canonically isomorphic to $D(G)$.

  We have that $\pic(X)=0$, so we wish to find all $G$-linearizations of $\O_X$. Twisting the trivial linearization by an element of $D(G)$ gives us a morphism $\iota\colon D(G)\to \pic^G(X)$. Given a linearization, we recover a character of $G$ by considering the action on the fiber over the closed point of $X$ (resp.~the origin), so $\iota$ is injective.

  Now we show $\iota$ is surjective. Let $M$ be a free $A$-module of rank 1. A $G$-linearization of $M$ is equivalent to a $D(G)$-grading $M=\bigoplus M_\chi$ compatible with the $D(G)$-grading on $A$ given by the $G$-action. To prove that this $G$-linearization is in the image of $\iota$, it suffices to find a semi-invariant generator of $M$.

  Let $m\in M$ be a generator, and let $m=\sum m_\chi$ with $m_\chi\in M_\chi$. In the case where $A$ is local with maximal ideal $\m$, not every $m_\chi$ can be in $\m\cdot m$, so some $m_\chi$ is a unit multiple of $m$. This $m_\chi$ is then a semi-invariant generator of $M$.

  In the case where $A$ is a polynomial ring, we claim that $m = m_\chi$ for some $\chi$. First suppose that $k$ has enough roots of unity that $G$ is a discrete group. For any $g\in G$, $g\cdot m$ is a generator, so it must be of the form $u_gm$ for $u_g\in A^\times = k^\times$, showing $m$ is semi-invariant (i.e.~is contained the $M_\chi$ for which $\chi(g)=u_g$). If $G$ is not discrete, let $K$ be an \'etale extension of $k$ so that the pullback $G_K$ is discrete. In $M\otimes_k K$, we then have that $m\otimes 1 = m_\chi\otimes 1$ for some $\chi$. Since $K$ is faithfully flat over $k$, $m = m_\chi$.
\end{proof}
\begin{corollary}\label{cor:pointed-affine-restriction}
  Suppose $X$ a pointed affine toric variety with quotient singularities and distinguished point $x$. Then the restriction morphism $\Cl(X)\to \Cl(\hhat\O_{X,x})$ is an isomorphism.
\end{corollary}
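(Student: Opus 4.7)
The plan is to apply Proposition \ref{prop:inv-class-gp} twice---once in setting (2) to $X$ itself and once in setting (1) to its complete local ring---and then to verify that the canonical isomorphisms it produces are compatible with the restriction morphism. First, I would use the Cox construction to present the pointed affine simplicial toric variety as $X = \spec A^H$, with $A = k[x_1, \dots, x_n]$ and $H$ a finite diagonalizable group acting faithfully and linearly on $\AA^n$ and fixing the origin. Letting $P \subseteq H$ be the subgroup generated by pseudoreflections and replacing $(A, H)$ by $(A^P, H/P)$, the Chevalley--Shephard--Todd theorem guarantees that $A^P$ is again a polynomial ring $k[y_1, \dots, y_n]$, on which the quotient group $G := H/P$ acts faithfully and linearly, fixes the origin, and has no pseudoreflections---so acts freely off a closed subset of codimension at least $2$. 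The presentation $X = \spec (A^P)^G$ is unchanged.

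Next, by Lemma \ref{l:diagonalizable-complete-local}, the completion $\hhat\O_{X,x}$ at the distinguished point is $G$-equivariantly identified with $(\hhat{A^P})^G = k[[y_1, \dots, y_n]]^G$, and the $G$-action on $\hhat{A^P}$ remains faithful, fixes the maximal ideal, and is free off a codimension-$\geq 2$ subset (inherited by flatness of completion). Consequently Proposition \ref{prop:inv-class-gp}(2) applies to $(A^P, G)$, yielding a canonical isomorphism $\Cl(X) \cong D(G)$, while Proposition \ref{prop:inv-class-gp}(1) applies to $(\hhat{A^P}, G)$, yielding a canonical isomorphism $\Cl(\hhat\O_{X,x}) \cong D(G)$.

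The most subtle part will be checking compatibility of these two identifications with the restriction morphism. Inspecting the proof of Proposition \ref{prop:inv-class-gp}, both isomorphisms take the same form: they identify $\Cl$ with $\pic^G$ of the ambient smooth scheme via Remark \ref{rmk:canonical-pic=cl}, and then send a $G$-equivariant line bundle to the character by which $G$ acts on the fiber over the origin (respectively the unique closed point). The restriction morphism corresponds on the $\pic^G$ side to pulling back $G$-equivariant line bundles along the equivariant flat morphism $\spec \hhat{A^P} \to \spec A^P$, and such pullback manifestly preserves the character on the fiber over the origin. Hence the restriction morphism is the identity on $D(G)$, and therefore an isomorphism. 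This step is essentially a naturality statement about the construction in Proposition \ref{prop:inv-class-gp} and should present no genuine obstacle.
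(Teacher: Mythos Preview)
Your proof is correct and follows essentially the same approach as the paper: apply Proposition~\ref{prop:inv-class-gp} in both settings to identify each class group with $D(G)$ via the character on the central fiber, then observe that restriction respects this identification. One simplification: the step of passing from $H$ to $G = H/P$ is unnecessary here, since for the Cox construction of a simplicial toric variety the group $H$ already acts freely outside codimension~2 (as noted in \S\ref{subsec:lb}, $[V/H]$ is the canonical stack), so $P$ is trivial and you may work directly with $H$ as the paper does.
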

\begin{proof}
  Let $X=V/H$ be the Cox construction of $X$. By Proposition \ref{prop:inv-class-gp}, $\Cl(X)$ and $\Cl(\hhat\O_{X,x})$ are naturally isomorphic to $D(H)$, where a divisor $D$ corresponds to the character of $H$ given by its action on the central fiber of $\O_{X^\can}(D)$. Therefore, we see that the map $\Cl(X)\to \Cl(\hhat\O_{X,x})$ is an isomorphism.
\end{proof}

\begin{proof}[\ifnotlms Proof \fi of Theorem \ref{thm:criterion}]
  To see that (\ref{crit:fin-quot}) implies (\ref{crit:tor-quot}), suppose $X=U/G$ with $U$ smooth and $G\subseteq \GG_m^r$ a finite diagonalizable group scheme. Let $V= (U\times \GG_m^r)/G$, where $G$ acts by $g\cdot (u,t) = (g\cdot u, g^{-1}\cdot t)$. Since $U\times \GG_m^r$ is smooth and the $G$-action is free, $V$ is smooth, and we have $X=V/\GG_m^r$. Conversely, if $X$ is quasi-projective and $k$ is infinite, then Theorem \ref{thm:main2} shows that (\ref{crit:tor-quot}) implies (\ref{crit:fin-quot}).

  Corollary \ref{cor:torusquot} shows that (\ref{crit:tor-quot}) and (\ref{crit:can}) are equivalent.

  To complete the proof, we show that (\ref{crit:div}) and (\ref{crit:can}) are equivalent. By Remark \ref{rmk:canonical-pic=cl} there is an equivalence between Weil divisors on $X$ and line bundles on $X^\can$. By Proposition \ref{prop:quot<->bundle}, $X^\can$ is a torus quotient stack if and only if there is a collection of Weil divisors $D_1,\dots,D_r$ on $X$ such that the residual representations of $\bigoplus\O_{X^\can}(D_i)$ are faithful.  To check faithfulness at a geometric point $z$ of $X^\can$, it suffices to do so after base changing by $\spec \hhat\O_{X,x}\to X$, where $x\in X$ is the image of $z$.

  By Remark \ref{rmk:canonical-local-rings}, we have
  \[
   X^\can\times_X \spec\hhat\O_{X,x}=[\spec R / G]
  \]
  where $R$ is a complete local ring and $G$ acts freely away from a codimension 2 closed subscheme of $\spec R$. Let $\m$ be the maximal ideal of $R$. By Lemma \ref{l:diagonalizable-complete-local}, $\hhat\O_{X,x}$ is isomorphic the complete local ring of a pointed affine toric variety $Y$ at its distinguished point $y$. By Remark \ref{rmk:canonical-local-rings}, we have a cartesian diagram
  \[
  \xymatrix{
  X^\can\ar[d] & [\spec R/ G]\ar[l]\ar[r]\ar[d] & Y^\can\ar[d]\\
  X\ & \spec\hhat\O_{X,x}\ar[l]\ar[r] & Y
  }
  \]
  By Corollary \ref{cor:pointed-affine-restriction}, $\Cl(\hhat\O_{X,x})\cong \Cl(Y)$, so there is a divisor $D'_i$ on $Y$ whose class has the same image in $\Cl(\hhat \O_{X,x})$ as that of $D_i$, and the images of the $D_i$ generate $\Cl(\hhat \O_{X,x})$ if and only if the $D_i'$ generate $\Cl(Y)$. The residual representation of $\bigoplus\O_{X^\can}(D_i)$ at $z$ is faithful if and only if the residual representation of $\bigoplus\O_{Y^\can}(D'_i)$ at $z$ is faithful. Since $y$ is the torus-invariant point of $Y$, this is equivalent to faithfulness of all residual representations $\bigoplus\O_{Y^\can}(D'_i)$, which by Lemma \ref{lem:lb}, holds exactly when the $D_i'$ generate $\Cl(Y)$.
\end{proof}

\begin{example}[The revolved cuspidal cubic is a $\mu_2$-quotient]
  Consider the surface $X\subseteq \AA^3_\CC$ cut out by $x^2+y^2 = z^2(z-1)$, the surface of revolution of the cuspidal cubic:
  \[\begin{tikzpicture}
    \draw[smooth,domain=-1.3:1.3,variable=\t] plot (\t*\t*\t-\t, 1- \t*\t);
    \draw[->] (0,0) -- (1.3,0) node[right] {$y$};
    \draw[dotted] (0,0) -- (0, 0.95);
    \draw[->] (0, 0.95) -- (0,1.3) node[right] {$z$};
    \draw[->] (0, 0) -- (-1.1,-.4) node[left] {$x$};
    \draw[dotted] (.385,.667) arc (0:180:.385 and .1);
    \draw (-.385,.667) arc (-180:0:.385 and .1);
    \draw[dotted] (.897,-.69) arc (0:180:.897 and .133);
    \draw (-.897,-.69) arc (-180:0:.897 and .233);
  \end{tikzpicture}\]
  The completed local ring at the singular point agrees with the completed local ring at the singular point of the toric variety $x^2+y^2=-z^2$, so by Corollary \ref{cor:pointed-affine-restriction}, we have that the formal local class group is $\ZZ/2$ at the cone point, and trivial elsewhere. The Weil divisor $D$ cut out by the ideal $(x-iy,z)$ is clearly non-principal at the origin, so it generates the formal local class group. Since $2D$ is Cartier, the proof of Theorem \ref{thm:criterion} shows that $X$ is expressible as a $\mu_2$-quotient of a smooth variety.
% 
%   We have the Weil divisor parameterized as $\bigl(t(t+1),t,t\bigr)$, or cut out by the ideal $I=\bigl(x-y(y+1),y-z\bigr)$. To see it is not Cartier, it suffices to note that it is not principal in the first infinitesimal neighborhood of the origin; its restriction is given by $(x-y,y-z)$.
% 
%   By Theorem \ref{thm:criterion}, if $k$ is infinite and $\mathrm{char}(k)\neq2$, we see that $X$ is a quotient of a smooth variety by a finite diagonalizable group. In fact, since we are able to produce a single Weil divisor $D$ which generates all formal local class groups, with $2D$ Cartier\anton{is it very ample?}\matt{I thought about it briefly, but don't see it}, the proof of the theorem shows that there is a smooth variety $U$ and a generic $\mu_2$-torsor $U\to X$, ramified exactly over $D$.
% 
% \anton{There's probably a better choice of $D$. Consider the divisors $V(x,z)$ and $V(y,z)$. These are both non-principal at the origin. Their sum is linearly equivalent to zero since it's the vanishing locus $V(z)$ \anton{though this isn't important for this example, unless we can show the global class group is $\ZZ/2$}. The rational function $x/y$ has a zero along $V(x,z)$ and a pole along $V(y,z)$, but has other zeros and poles (along $V(x,z+1)$ and $V(y,z+1)$).}
\end{example}

\begin{example}[Strict toroidal embeddings are finite diagonalizable group quotients]
  Recall that a \emph{toroidal embedding} consists of a scheme $X$ and an open subset $W\subseteq X$ such that for every closed point $x\in X$, there exists a toric variety $Y$ with torus $T$, a point $y\in Y$ and an isomorphism of complete local rings $\hhat\O_{X,x}\to\hhat\O_{Y,y}$ sending the ideal of $X\setminus W$ to the ideal of $Y\setminus T$. The toroidal embedding $W\subseteq X$ is said to be \emph{strict} if every irreducible component of $X\setminus W$ is a normal divisor.

  Suppose $W\subseteq X$ is a strict toroidal embedding, with $X$ a quasi-projective variety with (automatically diagonalizable) quotient singularities over an infinite field. Let $D_1,\dots,D_r$ be the irreducible components of $X\setminus W$. Given any point $x\in X$, choose a toric variety $Y$, a point $y\in Y$, and an isomorphism $\hhat\O_{X,x}\to\hhat\O_{Y,y}$ as above. Since $W\subseteq X$ is strict, the images of the $D_i$ are linearly equivalent to the images of the torus-invariant divisors of $Y$, and therefore generate the formal local class groups by Corollary \ref{cor:pointed-affine-restriction}. Theorem \ref{thm:criterion} then shows that $X$ is a quotient of a smooth scheme by a finite diagonalizable group.

  This was an implicit result of \cite[\S 5.3]{abramovich-karu}.
\end{example}

\subsection{Proof of Theorem 1.5: answering Question \ref{q:abelian} negatively}
\label{subsec:qab}

In this subsection, we prove Theorem \ref{thm:P2A5}. Suppose $X=\PP(V)/A_5$ is of the form $U/G$ for $U$ a smooth variety and $G$ a finite abelian group. Let $p=\mathrm{char}(k)$. At any point of $U$, the $p$-part $P\subseteq G$ must act by pseudoreflections, as the orders of the stabilizers of the canonical stack $[\PP(V)/A_5]$ have no $p$-part (see Remark \ref{rmk:canonical-universal-property}). It follows that $U/P$ is smooth. Since $X=(U/P)/(G/P)$, we may assume $G$ has no $p$-part. That is, to prove part (1) of Theorem \ref{thm:P2A5}, it suffices to show $X$ is not the quotient of a smooth variety by a tame finite diagonalizable group.

The action of $A_5$ on $V$ induces a linearization of $\O_{\PP(V)}(1)$. Let $\L$ denote this linearized line bundle. Letting $\alpha\colon A_5\times \PP(V)\to \PP(V)$ be the action and $p\colon A_5\times \PP(V)\to \PP(V)$ be the projection, we let $\phi\colon \alpha^*\O(1)\cong p^*\O(1)$ be the descent data corresponding to the linearization $\L$.

We show that $\L^{\otimes n}$ is the unique $A_5$-linearization of $\O(n)$. Suppose $\psi\colon \alpha^*\O(n)\cong p^*\O(n)$ is a choice of descent data corresponding to a linearization of $\O(n)$. Then $\psi/\phi^n:A_5\times \PP(V)\to \GG_m$ is given by a regular function $\chi\colon A_5\to \GG_m$, as regular functions on the components of $A_5\times \PP(V)$ are constant. The cocycle condition $f(gh,x)=f(h,x)\cdot f(g,h\cdot x)$ is satisfied by both $\phi^n$ and $\psi$, so it is satisfied by $\psi/\phi^n$. Thus, $\chi$ is a character of $A_5$, so it is trivial, so $\psi=\phi^n$ as desired.

To show that $X$ is not a quotient of a smooth variety by a tame finite diagonalizable group, by Theorem \ref{thm:criterion}, it is enough to show that every line bundle on the canonical stack $\X=[\PP(V)/A_5]$ has trivial residual representations.  If a line through the origin of $V$ is fixed by an element $g\in A_5$, then $g$ acts by rotation about that line. As a result, the stabilizers of points in $\PP(V)$ act trivially on the fibers of $\L$.  Since $\O(n)$ has a unique choice of linearization given by $\L^n$, the residual representations of every line bundle on $\X$ are trivial. This completes the proof of part (1) of Theorem \ref{thm:P2A5}.

Let $x$ be a singular point of $X$. By Remark \ref{rmk:canonical-pic=cl}, there is a correspondence between Weil divisors on $X$ and line bundles on $\X$. Since every line bundle on $\X$ has trivial residual representations, every Weil divisor on $X$ has trivial image in $\Cl(\hhat\O_{X,x})$. By Theorem \ref{thm:criterion} and Remark \ref{rmk:criterion-zariski-local}, any variety $Y$ which contains a dense open subset isomorphic to a neighborhood of $x$ in $X$ cannot be of the form $U/G$ with $U$ smooth and $G$ a tame finite diagonalizable group.

\end{document}